\newtheorem{thm}{Theorem}
\newtheorem{prop}[thm]{Proposition}
\newtheorem{ob}[thm]{Observation}
\newtheorem{cor}[thm]{Corollary}
\newtheorem{lemma}[thm]{Lemma}
\newtheorem{prob}{Problem}
\newcommand{\QED}{$\Box$}
\newcommand{\Wd}{{\rm Wd}}
\newcommand{\1}{ \vspace{0.1cm} }
\newcommand{\pn}{{\rm pn}}
\newcommand{\epn}{{\rm epn}}
\newcommand{\ipn}{{\rm ipn}}
\def\vertex(#1){\put(#1){\circle*{1.8}}}
\def\lab(#1)#2{\put(#1){\makebox(0,0)[c]{#2}}}
\newcommand{\grt}{\gamma_{\rm gr}^t}
\newcommand{\grz}{\gamma_{\rm gr}^{\rm Z}}
\let\oldenumerate\enumerate
\renewcommand{\enumerate}{
  \oldenumerate
  \setlength{\itemsep}{0pt}
  \setlength{\parskip}{0pt}
  \setlength{\parsep}{0pt}
}
\begin{document}

\title{Bounds on zero forcing using (upper) total domination and minimum degree}

\author{
Bo\v{s}tjan Bre\v{s}ar$^{a,b}$
\and
Mar\'{i}a Gracia Cornet$^{c,d}$
\and Tanja Dravec$^{a,b}$
\and Michael Henning$^{e}$
}

\date{\today}

\maketitle

\begin{center}
$^a$ Faculty of Natural Sciences and Mathematics, University of Maribor, Slovenia \\
$^b$ Institute of Mathematics, Physics and Mechanics, Ljubljana, Slovenia\\
$^c$  Depto. de Matem\'atica, FCEIA, Universidad Nacional de Rosario, Argentina \\
$^d$ Consejo Nacional de Investigaciones Cient\'ificas y T\'ecnicas, Argentina \\
$^e$ Department of Mathematics and Applied Mathematics, University of Johannesburg,  South Africa\\
\end{center}

\begin{abstract}
While a number of bounds are known on the zero forcing number $Z(G)$ of a graph $G$ expressed in terms of the order of a graph and maximum or minimum degree, we present two bounds that are related to the (upper) total domination number $\gamma_t(G)$ (resp.~$\Gamma_t(G)$) of $G$. We prove that $Z(G)+\gamma_t(G)\le n(G)$ and $Z(G)+\frac{\Gamma_t(G)}{2}\le n(G)$ holds for any graph $G$ with no isolated vertices of order $n(G)$. Both bounds are sharp as demonstrated by several infinite families of graphs. In particular, we show that every graph $H$ is an induced subgraph of a graph $G$ with $Z(G)+\frac{\Gamma_t(G)}{2}=n(G)$. Furthermore, we prove a characterization of graphs with power domination equal to $1$, from which we derive a characterization of the extremal graphs attaining the trivial lower bound $Z(G)\ge \delta(G)$. The class of graphs that appears in the corresponding characterizations is obtained by extending an idea from [D.D.~Row, A technique for computing the zero forcing number of a graph with a cut-vertex, Linear Alg.\ Appl.\ 436 (2012) 4423--4432], where the graphs  with zero forcing number equal to $2$ were characterized.
\end{abstract}

\noindent
{\small \textbf{Keywords:} Grundy domination number; zero forcing; total domination; upper total domination; power domination }\\
\noindent
{\small \textbf{AMS subject classification: 05C69, 05C35}}

\newpage
%%%%%%%%%%%%%%%%%%%%%%%%%%%%%%
\section{Introduction}
\label{sec:intro}

Concentrating solely on the positions of non-zero entries of real symmetric matrices, they can be described by the adjacency matrix of an undirected graph. Fixing the positions of non-zero values and considering all possible values, the concepts of maximum nullity and minimum rank of the corresponding graph arise.  The zero forcing number was introduced in~\cite{AIM} as a useful (and often attained) upper bound on the maximum nullity of a graph. (See~\cite{barioli, BBF2013} and the recent monograph~\cite{ZF-book} surveying inverse problems and zero forcing in graphs.) Zero forcing is defined by the process, which starts by choosing a set $S$ of vertices of a graph $G$ and coloring them blue. The \emph{color-change rule} consists of identifying a blue vertex having only one non-blue neighbor and coloring that neighbor blue. The color-change rule is performed as long as possible. If at the end of the process all vertices become blue, then the initial set $S$ is a \emph{zero forcing set} of $G$. The minimum cardinality of a zero forcing set in $G$ is the \emph{zero forcing number}, $Z(G)$, of $G$.

Several bounds are known for the zero forcing number, where the trivial lower bound $Z(G)\ge \delta(G)$ involves the minimum degree of a graph. Many authors studied upper bounds on $Z(G)$ involving the order and the maximum degree of $G$, and considered also extremal families of graphs attaining the bounds~\cite{acdp-2015,gprs-2016,gms-2018}. For instance, the most recent such bound is $Z(G)\le \frac{n(\Delta - 2)}{\Delta - 1}$, which holds when $G$ is a connected graph with maximum degree $\Delta\ge 3$ and $G$ is not isomorphic to one of the five sporadic graphs~\cite{gr-2018}. Bounds on the zero forcing number expressed in terms of order, maximum and minimum degree were also proved~\cite{cp-2015} and the extremal graphs have recently been characterized~\cite{lx-2023}. We remark that good upper bounds on $Z(G)$ are particularly interesting, because they also present upper bounds on the maximum nullity of a graph.

Domination in graphs is one of the most studied topics in graph theory; see a recent monograph~\cite{HaHeHe-23} surveying core concepts of domination theory.  On the first sight, domination does not seem related to maximum nullity and zero forcing, yet there are some surprising connections. Notably, the concept of power domination, which was introduced in~\cite{haynes-2002} as a model for monitoring electrical networks, has a very similar definition to that of zero forcing. In particular, the corresponding graph invariant $\gamma_P(G)$ is a lower bound for $Z(G)$ in all graphs $G$~\cite{BFF-2018}.  Moreover, the so-called Z-Grundy domination number $\grz(G)$ of $G$, as introduced in~\cite{bbgkkptv-2017}, is dual to the zero forcing number of $G$. In particular, $Z(G)=n(G)-\grz(G)$ holds for every graph $G$ (where $n(G)$ is the order of $G$).

In this paper, we further relate zero forcing with domination. In particular, we prove an upper  bound on the zero forcing number of a graph $G$, expressed in terms of the total domination number $\gamma_t(G)$ of $G$.  (The latter invariant is one of the central concepts of graph domination, cf.~\cite{HaHeHe-23} and the book~\cite{book-total} surveying total domination in graphs). We prove that for any graph $G$ with no component isomorphic to a complete graph, we have $Z(G)\le n(G)-\gamma_t(G)$ and $Z(G)\le n(G)-\frac{\Gamma_t(G)}{2}$ where $\Gamma_t(G)$ is the upper total domination number of $G$. We prove that both of these bounds are sharp, present several properties of the two families of extremal graphs, and prove that any graph is an induced subgraph of a graph $G$ with $Z(G)= n(G)-\frac{\Gamma_t(G)}{2}$. We also show that the ratio $\frac{n-Z(G)}{\Gamma_t(G)}$ can be arbitrarily large (which implies that the ratio $\frac{n-Z(G)}{\gamma_t(G)}$ can also be arbitrarily large). In the proofs, we are often using the language of Z-Grundy domination, which we present in the next section. We are also studying the graphs $G$ with $Z(G)=\delta(G)$, that is, achieving the trivial lower bound.  We prove a characterization of these graphs by widely extending a characterization of the graphs $G$ with zero forcing number equal to $2$ due to Row~\cite{Row-12}. The result is obtained from a characterization of the graphs with power domination equal to $1$, which we also prove.% and derive a characterization of graphs $G$ that have a power dominating set $\{x\}$ such that {\rm deg}$(x)=\delta(G)$.

We denote the \emph{degree} of a vertex $v$ in a graph $G$ by $\deg_G(v)$.  An \emph{isolated vertex} is a vertex of degree~$0$, and an \emph{isolate-free graph} is a graph with no isolated vertices. Hence if $G$ is an isolate-free graph, then $\deg_G(v) \ge 1$ for all vertices $v \in V(G)$. In the following section, we present main definitions used in the paper. In particular, we recall the definition of the Z-Grundy domination number, $\grz(G)$, and the equality $Z(G)=n(G)-\grz(G)$ which holds in all graphs $G$ (note that in the seminal paper~\cite{bbgkkptv-2017} Z-Grundy domination number was introduced for isolate-free graphs, but our definition presented in the next section, allows also isolated vertices). In Section~\ref{sec:totaldom}, we are concerned with total domination, we prove the bound  $\grz(G)\ge \gamma_t(G)$, and present several properties of graphs $G$ that attain the equality $\grz(G)=\gamma_t(G)$. In Section~\ref{sec:uppertotal}, we prove the mentioned results involving the upper total domination number. Finally, in Section~\ref{sec:powerdom}, we prove the characterization of the graphs $G$ with power domination $1$ and the graphs $G$ with $Z(G)=\delta(G)$. Several open problems are posed throughout the paper.

\section{Definitions and notation}
\label{sec:def}

For graph theory notation and terminology, we generally follow~\cite{HaHeHe-23}.  Specifically, let $G$ be a graph with vertex set $V(G)$ and edge set $E(G)$, and of order $n(G) = |V(G)|$ and size $m(G) = |E(G)|$. If $G$ is clear from the context, we simply write $V = V(G)$ and $E = E(G)$. The \emph{open neighborhood} of a vertex $v$ in $G$ is $N_G(v) = \{u \in V \, \colon \, uv \in E\}$ and the \emph{closed neighborhood of $v$} is $N_G[v] = \{v\} \cup N_G(v)$. We denote the degree of $v$ in $G$ by $\deg_G(v)$, and so $\deg_G(v) = |N_G(v)|$. Two vertices are \emph{neighbors} if they are adjacent. For a set $X \subseteq V(G)$ and a vertex $v \in V(G)$, we denote by $\deg_X(v)$ the number of neighbors of $v$ in $G$ that belong to the set $X$, that is, $\deg_X(v) = |N_G(v) \cap X|$. In particular, if $X = V(G)$, then $\deg_X(v) = \deg_G(v)$. The subgraph of $G$ induced by a set $D \subseteq V$ is denoted by $G[D]$. For $k \ge 1$ an integer, we let $[k]$ denote the set $\{1,\ldots,k\}$.

A vertex $x \in V$ \emph{dominates} a vertex $y$ if $y\in N_G[x]$, and we say that $y$ is \emph{dominated} by~$x$. If $D \subseteq V$, then a vertex $y$ in $G$ is \emph{dominated} by $D$ if there exists $x \in D$ that dominates $y$. A set $D$ is a \emph{dominating set} of a graph $G$ if every vertex in $G$ is dominated by $D$.

A vertex $x \in V$ \emph{totally dominates} a vertex $y$ if $y\in N_G(x)$, and we then also say that $y$ is \emph{totally dominated} by $x$. If $D \subseteq V$, then $y \in V$ is \emph{totally dominated} by $D$ if there exists $x\in D$ that totally  dominates $y$. A set $D$ is a \emph{total dominating set} (or shortly, a \emph{TD}-\emph{set}) of $G$ if every vertex in $G$ is totally dominated by $D$. The minimum cardinality of a TD-set of $G$ is the \emph{total domination number} of $G$, denoted $\gamma_t(G)$. A TD-set $D$ in $G$ such that $S$ is not a TD-set of $G$ whenever $S \subsetneq D$, is a \emph{minimal TD-set} of $G$. The maximum cardinality of a minimal TD-set in $G$ is the \emph{upper total domination number}, $\Gamma_t(G)$, of $G$. A $\gamma_t$-\emph{set of}  $G$ is a TD-set of $G$ of cardinality $\gamma_t(G)$, while a $\Gamma_t$-\emph{set of}  $G$ is a minimal TD-set of $G$ of cardinality $\Gamma_t(G)$.

For a set $D \subseteq V$ and a vertex $v \in D$, the \emph{open $D$-private neighborhood} of $v$, denoted by $\pn(v,D)$, is the set of vertices that are in the open neighborhood of $v$ but not in the open neighborhood of the set $D \setminus \{v\}$. Equivalently, $\pn(v,D) = \{w \in V \, \colon N(w) \cap D = \{v\}\}$. The \emph{$D$-external private neighborhood} of $v$ is the set $\epn(v,D) = \pn(v,D) \setminus D$, and its \emph{open $D$-internal private neighborhood} is the set $\ipn(v,D) = \pn(v,D) \cap D$. We note that $\pn(v,D) = \ipn(v,D) \, \cup \, \epn(v,D)$. A vertex in $\epn(v,D)$ is a \emph{$D$-external private neighbor} of $v$, and a vertex in $\ipn(v,D)$ is a \emph{$D$-internal private neighbor} of $v$. Hence, if $w \in \epn(v,D)$, then $w \notin D$ and $w$ is not totally dominated by $D \setminus \{w\}$, while if $w \in \ipn(v,D)$, then $w \in D$ and $w$ is not totally dominated by $D \setminus \{w\}$. A fundamental property of minimal TD-sets (see~\cite[Lemma~4.25]{HaHeHe-23}) is that $D$ is a minimal TD-set in $G$ if and only if $|\epn(v,D)| \ge 1$ or $|\ipn(v,D)| \ge 1$ hold for every vertex $v \in D$.

The concept of Grundy domination can be presented by a sequence of vertices in a graph. The first type of the corresponding graph invariant, the so called Grundy domination number, was defined in~\cite{bgmrr-2014} with a motivation coming from the domination game. A few years latter, the Grundy total domination number~\cite{bhr} and the Z-Grundy domination number~\cite{bbgkkptv-2017} were introduced. A sequence $S=(v_1,\ldots,v_k)$ of vertices in a graph $G$ is a \emph{Z-sequence} if for every $i\in \{2,\ldots,k\}$,

\begin{equation}
\label{eq:defZGrundy}
N_G(v_i) \setminus \bigcup_{j=1}^{i-1}N_G[v_j] \ne \emptyset. \1
\end{equation}
The corresponding set of vertices from the sequence $S$ will be denoted by $\widehat{S}$.
The maximum length $|\widehat{S}|$ of a Z-sequence $S$ in a graph $G$ is the \emph{Z-Grundy domination number}, $\grz(G)$, of $G$. (The definition of \emph{Grundy total domination number} of a graph $G$, $\grt(G)$, is similar,  one just needs to change the closed neighborhood symbol in~\eqref{eq:defZGrundy} with the open neighborhood symbol.) %Every maximal sequence $S$ enjoying the property~\eqref{eq:defZGrundy} for all of its vertices is a \emph{dominating sequence} in $G$.
%Note that a Z-sequence $S$ is a dominating sequence in $G$ if and only if $\widehat{S}$ is a dominating set of $G$.
If $(v_1,\ldots,v_k)$ is a Z-sequence, then we say that $v_i$ \emph{footprints} the vertices from $N_G[v_i] \setminus \cup_{j=1}^{i-1}N_G[v_j]$, and that $v_i$ is the \emph{footprinter} of every vertex $u\in N_G[v_i] \setminus \cup_{j=1}^{i-1}N_G[v_j]$, for any $i\in [k]$ (where $[k]=\{1,\ldots,k\}$).
Note that if $S$ is a Z-sequence, $x\in \widehat{S}$ may footprint itself, but it must footprint (also) a vertex $y$ distinct from $x$. For a sequence $S=(u_1,\ldots , u_k)$ and a vertex $x \notin \widehat{S}$, we use the following notation: $S\oplus (x)=(u_1,\ldots , u_k,x)$.

As it turns out, the Z-Grundy domination number is dual to the zero forcing number. Moreover, a sequence $S$ is a Z-sequence if and only if the set of vertices outside $S$ forms a zero forcing set~\cite{bbgkkptv-2017}. In particular, this implies that
\begin{equation}
\label{eq:zeroZGrundy}
Z(G)=n(G)-\grz(G)
\end{equation}
for every graph $G$. In a subsequent paper, Lin presented a natural connection between four variants of Grundy domination and four variants of zero forcing~\cite{lin-2019}.  The connections show that all versions of Grundy domination can be applied in the study of different types of minimum rank parameters of symmetric matrices.

Given a graph $G$, a complete subgraph is a \emph{clique} in $G$.
Similarly, a complete graph may also be called a clique. A vertex of degree~$1$ in $G$ is called a \emph{leaf} of $G$. A vertex $v \in V$ is a \emph{simplicial vertex} of $G$, if $N_G(v)$ induces a clique. The graph $G$ is \emph{chordal} if it contains no induced cycles of length more than~$3$. Two vertices $u$ and $v$ in $G$ are \emph{closed twins} if $N_G[u] = N_G[v]$ and \emph{open twins} if $N_G(u)=N_G(v)$. Vertices $u$ and $v$ in $G$ are \emph{twins} if they are either open or closed twins. A vertex $v \in V$ is called a \emph{twin} vertex if there exists $u \in V(G)$, such that $u$ and $v$ are twins.

In all notations presented in this section, index $G$ may be omitted if the graph $G$ is understood from the context.

%%%%%%%%%%%%%%%%%%%%%%%
%%%%%%%%%%%%%%%%%%%%%%%%%%
\section{Zero forcing and total domination}
\label{sec:totaldom}
%%%%%%%%%%%%%%%%%%%%%%%%%%%

If $G$ is an isolate-free graph and $D$ is a (minimal) TD-set of $G$, then the induced subgraph $G[D]$ is isolate-free. In addition, the following observation is easy to see.

\begin{ob}
\label{ob:private}
Let $G$ be an isolate-free graph and let $D$ be a (minimal) TD-set of $G$.
If $x$ belongs to a component $C$ of $G[D]$ such that $x$ is not adjacent to a vertex $y\in V(C)$ with $\deg_C(y)=1$, then $x$ has an external private neighbor with respect to $D$.
\end{ob}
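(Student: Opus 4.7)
The plan is to read the conclusion off of the fundamental property of minimal TD-sets quoted in Section~\ref{sec:def} just before the observation: $D$ is a minimal TD-set precisely when every $v \in D$ satisfies $|\epn(v,D)| \ge 1$ or $|\ipn(v,D)| \ge 1$. Applied to our vertex $x$, this gives an immediate dichotomy: either $\epn(x,D)\ne \emptyset$, in which case we are done, or $\ipn(x,D)\ne \emptyset$, and we must derive a contradiction from the hypothesis that $x$ has no neighbor $y \in V(C)$ with $\deg_C(y)=1$.

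To handle the second case, I would pick any $y\in \ipn(x,D)$ and translate the definition into structural information inside $G[D]$. By definition, $y\in D$ and $N_G(y)\cap D=\{x\}$. Since $y$ is adjacent to $x$ and both vertices belong to $D$, they lie in the same component of $G[D]$, which must be $C$. The equality $N_G(y)\cap D=\{x\}$ then says that $y$ has exactly one neighbor inside $C$, namely $x$, so $\deg_C(y)=1$. Thus $x$ is adjacent to a degree-one vertex of $C$, contradicting the hypothesis. Hence the second case cannot occur, forcing $\epn(x,D)\ne \emptyset$.

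There is no genuine obstacle here; the argument is a direct unpacking of the definitions of $\ipn$, $\epn$, and of "component of $G[D]$", combined with the minimality characterization that is already available. The only point requiring a small amount of care is the identification of $C$ as the component of $G[D]$ that contains the internal private neighbor $y$, which follows simply because $x$ and $y$ are adjacent in $G[D]$. Note that we do need minimality of $D$ (not merely that $D$ is a TD-set), since the characterization used is an if-and-only-if for minimal TD-sets.
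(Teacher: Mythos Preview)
Your argument is correct and is exactly the natural unpacking the paper has in mind; the paper gives no explicit proof beyond stating that the observation ``is easy to see,'' and your use of the minimality characterization from Section~\ref{sec:def} together with the identification $\deg_C(y)=|N_G(y)\cap V(C)|=1$ is precisely what is needed. Your closing remark that minimality of $D$ is genuinely required (and not merely that $D$ be a TD-set) is also correct, despite the parenthetical phrasing in the statement.
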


We will also make use of the following notation. Let $D$ be a (minimal) TD-set of $G$, and let $C_1,\ldots, C_{\ell}$ be the components of $G[D]$, which are isomorphic to $K_2$; possibly $\ell=0$ when there are no such components.
For each $i\in [\ell]$, let $A_i(D)$ denote the set of vertices that are totally dominated by $V(C_i)$ and are not totally dominated by $D\setminus V(C_i)$. In particular, $V(C_i)\subset A_i(D)$ for all $i\in [\ell]$, since both vertices in $C_i$ are (internal) private neighbors to each other.

\begin{thm}
\label{thm:gammaZ-total}
If $G$ is a graph such that no component of $G$ is a clique, then $\grz(G) \ge \gamma_t(G)$.
\end{thm}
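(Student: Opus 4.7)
The plan is to construct a Z-sequence of length $\gamma_t(G)$ starting from a $\gamma_t$-set $D=\{d_1,\ldots,d_k\}$. For each $v\in D$, the private-neighbor dichotomy (Observation~\ref{ob:private} together with the characterization of minimal TD-sets recalled just before it) splits $v$ into two roles. If $\epn(v,D)\ne\emptyset$ then any $y\in\epn(v,D)$ witnesses $v$ at \emph{every} position in the sequence, because $N(y)\cap D=\{v\}$ gives $y\notin N[w]$ for all $w\in D\setminus\{v\}$. If instead $\epn(v,D)=\emptyset$, minimality of $D$ yields some $w\in\ipn(v,D)$ and, by the same privacy argument, $w$ witnesses $v$ precisely when $w$ is placed after $v$ in the sequence (the requirement on $w$ reduces to a position constraint).

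I would then order each component $C$ of $G[D]$ separately and concatenate the orderings. When $|V(C)|\ge 3$, Observation~\ref{ob:private} forces every leaf of $G[D]$ in $C$ to have an external private neighbor: otherwise such a leaf together with its unique $G[D]$-neighbor would constitute a $K_2$-component of $G[D]$, contradicting $|V(C)|\ge 3$. So I place the non-leaves of $C$ first (each inner non-leaf has a leaf in $\ipn(\cdot,D)$ that will come later in $C$), and the leaves at the end (witnessed by their external private neighbors). For a $K_2$-component $C_i=\{u_i,v_i\}$ of $G[D]$ with at least one $\epn$-nonempty vertex, I place the other vertex first (witnessed by the nonempty-$\epn$ one, via $\ipn$) and the $\epn$-nonempty vertex second.

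The genuine obstacle is a $K_2$-component $\{u_i,v_i\}$ of $G[D]$ in which both vertices have empty external private neighborhood: the "placed-after" relation then cycles ($u_i\prec v_i$ and $v_i\prec u_i$), so no ordering of $\{u_i,v_i\}$ using only internal privates works. Here I would exploit the notation $A_i(D)$ defined just before the theorem. If $N(v_i)\setminus N[u_i]\neq\emptyset$ (or symmetrically) I put this $K_2$-component at the very beginning of the concatenated order and use a vertex in $N(v_i)\setminus N[u_i]$ as the witness for $v_i$. Otherwise $N[u_i]=N[v_i]$, so $u_i,v_i$ are closed twins; then the $G$-component of $\{u_i,v_i\}$ equals $\{u_i,v_i\}\cup(N(u_i)\cap N(v_i))$, and the hypothesis that no component of $G$ is a clique forces a pair of non-adjacent vertices in that set. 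I would use such a pair to replace $v_i$ (or $u_i$) in the Z-sequence by a vertex outside $N[u_i]$, preserving length $|D|=\gamma_t(G)$.

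The delicate step, and the main obstacle, is the global coordination when several inner $K_2$-components occur or interact with the outer ones: witnesses available locally may lie inside $\bigcup_{j<i}N[v_j]$ by the time the corresponding vertex is placed, since common neighbours can be absorbed by closed neighborhoods of earlier $v_j$. Addressing this will require placing the inner $K_2$-components first in the concatenated order (while their few available witnesses are not yet covered) and, when necessary, modifying $D$ to a different $\gamma_t$-set — for instance swapping a vertex of a problematic $K_2$-component with a vertex of $A_i(D)\setminus V(C_i)$ or with a vertex of the non-clique $G$-component — so as to eliminate the pathological configuration while retaining both $|D|=\gamma_t(G)$ and the total-domination property.
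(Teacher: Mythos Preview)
Your overall strategy mirrors the paper's: build a Z-sequence from a $\gamma_t$-set $D$ by handling the components of $G[D]$ separately, with the $K_2$-components whose two vertices both have empty external private neighbourhood as the hard case. You correctly isolate this as the crux and correctly suspect that modifying $D$ is the way out. But the proposal stops short of a proof precisely there, and the local fixes you describe do not assemble into a global argument.

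\textbf{The main gap.} Your ``put it at the very beginning'' trick handles at most one bad $K_2$-component: if two such components $\{u_i,v_i\}$ and $\{u_j,v_j\}$ occur, you cannot place both first, and whichever comes second may have its intended witness already absorbed. You acknowledge this (``global coordination''), but the vague suggestion to swap a vertex of a problematic $K_2$ with something in $A_i(D)\setminus V(C_i)$ is not carried out. The paper resolves this by an \emph{extremal choice of $D$ up front}: among all $\gamma_t$-sets, pick one minimising the number of $K_2$-components of $G[D]$; among those, minimise the number of $K_2$-components $C_i=\{x_i,y_i\}$ with $(N(x_i)\cap A_i)\setminus\{y_i\}=(N(y_i)\cap A_i)\setminus\{x_i\}$ (exactly your ``both $\epn$ empty'' condition). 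One then shows that under this double minimisation no bad $K_2$-component survives: if $A_i$ were a clique, a two-vertex swap pushing into the rest of the $G$-component would reduce the $K_2$-count; otherwise a single-vertex swap inside $A_i$ reduces the count of bad $K_2$'s. Either outcome contradicts the extremal choice, so the bad case simply does not arise. This extremal framework is the missing mechanism.

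\textbf{The closed-twin branch is incorrect as stated.} From $N[u_i]=N[v_i]$ you conclude that the $G$-component of $u_i$ equals $N[u_i]$. That does not follow: common neighbours of $u_i,v_i$ lie outside $D$ and can have further neighbours outside $N[u_i]$, so the component may be strictly larger and need not force a non-adjacent pair inside $N[u_i]$. Your plan to ``replace $v_i$ in the Z-sequence by a vertex outside $N[u_i]$'' is therefore unsupported.

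\textbf{A minor inaccuracy in the $|V(C)|\ge 3$ case.} The claim that ``each inner non-leaf has a leaf in $\ipn(\cdot,D)$'' is false (take the middle vertex of a $P_5$-component). Your ordering still works, but for a different reason: a non-leaf \emph{not} adjacent to any leaf has an external private neighbour by Observation~\ref{ob:private}, and that serves as its witness at any position. The paper uses the reverse order (neighbours of leaves first, then the rest), which makes the justification via Observation~\ref{ob:private} immediate.
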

\begin{proof}
In the proof, we will construct a Z-sequence $S$ of $G$, with $|\widehat{S}|\ge |D|$, where $D$ is a $\gamma_t$-set of $G$. Among all $\gamma_t$-sets of $G$, let $D$ be chosen in such a way that $G[D]$ has the smallest possible number of $K_2$-components.
In addition, letting $C_1,\ldots, C_{\ell}$ be the $K_2$-components of $G[D]$, and $V(C_i)=\{x_i,y_i\}$ for all $i\in [\ell]$, we choose $D$ among all $\gamma_t$-set of $G$ restricted as above in such a way that the number of $K_2$-components in $G[D]$ for which $(N(x_i)\cap A_i(D))\setminus \{y_i\}=(N(y_i)\cap A_i(D))\setminus \{x_i\}$ is as small as possible.

Since $D$ is fixed, we may simplify the notation and write $A_i$ instead of $A_i(D)$ for the vertices that are totally dominated by $x_i$ or $y_i$, and are not adjacent to any vertex in $D\setminus V(C_i)$.

First, consider the components of $G[D]$ that are not isomorphic to $K_2$. For each such component $C$, first add to $S$ all vertices $u$ in $G[D]$ such that $u$ is  adjacent to a vertex $v\in V(C)$ with $\deg_C(v)=1$. Each such vertex $u$ footprints a corresponding vertex $v$ noting that $v$ is a $D$-internal private neighbor of $u$. Thereafter, we add to $S$ all the remaining vertices in $C$. By Observation~\ref{ob:private}, every such vertex $x$, since it has no $D$-internal private neighbor, has a $D$-external private neighbor $y$. Therefore, $x$ footprints $y$, and so the resulting sequence $S$, constructed so far, is a Z-sequence. Note that the number of vertices added to $S$ from $C$ is $|V(C)|$ since all vertices of $C$ have been added to $S$.

Dealing in the same way with all components $C$ of $G[D]$, where $|V(C)|\ge 3$, as explained in the previous paragraph, we are left with the components $C_1,\ldots, C_{\ell}$ of $G[D]$, where $V(C_i)=\{x_i,y_i\}$ for all $i\in [\ell]$. Without loss of generality, we may chose the indices of the components $C_i$ in such a way that for $C_i$, where $i\in [k]$, we have
\begin{equation}
\label{eq:property2}
(N(x_i)\cap A_i)\setminus \{y_i\} = (N(y_i)\cap A_i)\setminus \{x_i\},
\end{equation}
while for $i\in\{k+1,\ldots,\ell\}$, components $C_i$ do not have this property, that is,
$(N(x_i)\cap A_i)\setminus \{y_i\} \ne (N(y_i)\cap A_i)\setminus \{x_i\}$. Possibly $k=0$ or $k=\ell$, which represent cases where only one of the two types of components appears. Renaming vertices if necessary, we may assume without loss of generality that
\begin{equation}
\label{eq:property3}
\bigl(N(x_i)\setminus N[y_i]\bigr)\cap A_i\ne \emptyset,
\end{equation}
and let $a_i\in \bigl(N(x_i)\setminus N[y_i]\bigr)\cap A_i$. For each $i\in \{k+1,\ldots,\ell\}$, we first add to $S$ the vertex $y_i$, and then the vertex $x_i$. In this way, $y_i$ footprints $x_i$, while $x_i$ footprints $a_i$ and thus the so extended sequence $S$ is still a Z-sequence. We note that we have added two vertices to $S$ from each component $C_i$, where $|V(C_i)|=2$.

Finally, we deal with components $C_i$, where $i\in [k]$. We claim that none of the sets $A_i$ induces a complete graph. Suppose, to the contrary, that $G[A_i]$ is a clique. Since no component of $G$ is a clique, there exists a vertex $a\in A_i$, which is adjacent to a vertex $b\notin A_i$. We note that $b\notin D$, and by the definition of sets $A_i$, the vertex $b$ is totally dominated by a vertex $c\in D\setminus V(C_i)$. Now, $D'=(D\setminus\{x_i,y_i\})\cup\{a,b\}$ is a TD-set of $G$. Since $|D'| = |D| = \gamma_t(G)$, the TD-set $D'$ is therefore a $\gamma_t$-set of $G$. Further vertex $a$ is a $D'$-internal private neighbor of vertex $b$ and vertices in $A_i$ are $D'$-external private neighbors of vertex~$a$. Now, $G[D']$ has fewer $K_2$-components as $G[D]$, which is a contradiction to the initial assumption on $D$. Therefore, no set $A_i$ induces a clique for $i \in [k]$.

For each $i\in [k]$, there therefore exists a vertex $a_i \in A_i \setminus \{x_i,y_i\}$ such that $a_i$ is not adjacent to a vertex $b_i \in A_i\setminus\{x_i,y_i\}$. Now, $D'=(D\setminus\{y_1\})\cup \{a_1\}$ is a TD-set of $G$, since vertices in $V(G)\setminus A_1$ are totally dominated by $D\setminus V(C_1)$, vertices of $A_1\setminus\{x_1\}$ are totally dominated by $x_1$, and $x_1$ is totally dominated by $a_1$. Further, $G[D']$ has the same number of $K_2$-components as $G[D]$. However, since
\[
(N(x_1)\cap A_1(D'))\setminus \{a_1\} \ne (N(a_1)\cap A_1(D'))\setminus \{x_1\},
\]
the number of components with the additional property given in Equation~(\ref{eq:property2}) is fewer in $D'$ than in $D$. Therefore, we are in contradiction with the initial assumption on $D$, and so this case does not appear, implying that $k=0$. We infer that $\widehat{S}=D$, and so $S$ is a Z-sequence of length $\gamma_t(G)$, which yields $\grz(G)\ge \gamma_t(G)$.~\QED
\end{proof}

\bigskip

Translated to the zero forcing number, Equation~(\ref{eq:zeroZGrundy}) and Theorem~\ref{thm:gammaZ-total} imply the following bound on the zero forcing number, where a \emph{clique component} of a graph is a component of the graph that is a clique.

\begin{cor}
\label{cor:ZF-total}
If $G$ is a graph with no clique component, then $Z(G)\le n(G)-\gamma_t(G)$.
\end{cor}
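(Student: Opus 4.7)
The plan is to derive the corollary directly by combining the duality identity~(\ref{eq:zeroZGrundy}) with the lower bound on $\grz(G)$ established in Theorem~\ref{thm:gammaZ-total}. No additional combinatorial argument should be needed, since the hypothesis that $G$ has no clique component is precisely the hypothesis required to invoke Theorem~\ref{thm:gammaZ-total}.

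First I would note that the duality equation $Z(G) = n(G) - \grz(G)$ holds for every graph $G$, so in particular for the graph $G$ in the statement. Next, I would apply Theorem~\ref{thm:gammaZ-total} under the hypothesis that no component of $G$ is a clique to conclude $\grz(G) \ge \gamma_t(G)$. Rearranging this inequality gives $-\grz(G) \le -\gamma_t(G)$, and adding $n(G)$ to both sides yields
\[
Z(G) \;=\; n(G) - \grz(G) \;\le\; n(G) - \gamma_t(G),
\]
which is exactly the claimed bound.

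The only subtlety to confirm is that both invariants $Z(G)$ and $\gamma_t(G)$ are well-defined under the stated hypothesis; in particular, $\gamma_t(G)$ requires $G$ to be isolate-free, which is guaranteed because a graph with an isolated vertex would have that vertex as a clique component (namely $K_1$), contradicting the hypothesis. Hence there is no obstacle: the corollary is an immediate two-line consequence of the preceding theorem and the duality identity.
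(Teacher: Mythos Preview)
Your proposal is correct and matches the paper's own justification exactly: the paper simply states that Equation~(\ref{eq:zeroZGrundy}) and Theorem~\ref{thm:gammaZ-total} together imply the corollary, which is precisely the two-line derivation you give. Your added remark that the no-clique-component hypothesis rules out isolated vertices (so $\gamma_t(G)$ is defined) is a nice touch, though the paper does not spell this out.
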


 The difference between the total domination number and the Z-Grundy domination number of a graph can be arbitrary large. Moreover, the ratio $\frac{\grz(G)}{\gamma_t(G)}$ can be arbitrarily large. For instance, let $G_k$, where $k\in \mathbb{N},$ be the graph obtained from the disjoint union of two copies of the complete graph $K_k$ by adding edges that form a perfect matching of $G_k$. Clearly, $\grz(G_k)=k$ and $\gamma_t(G_k)=2$.

On the other hand, the bound in Theorem~\ref{thm:gammaZ-total} is sharp. For instance, for the star $K_{1,\ell}$ we have $\grz(K_{1,\ell})=2=\gamma_t(G)$. In the remainder of this section, we study properties of graphs whose Z-Grundy domination number equals the total domination number.

Since $\gamma_t(G) \le \grz(G) \le \grt(G)$ holds for any graph $G$ with no component isomorphic to complete graph, any graph $G$ with $\gamma_t(G)=\grt(G)$ also satisfies $\gamma_t(G)=\grz(G)$. Graphs with $\gamma_t(G)=\grt(G)=k$, which are known under the name \emph{total $k$-uniform graphs}, were studied in~\cite{bahadir-2020,bhr,DJ-21}. In these papers, bipartite graphs with $\gamma_t(G)=\grt(G)=4$ and with $\gamma_t(G)=\grt(G)=6$ were characterized. It was also shown that all connected total $k$-uniform graphs having no open twins are regular, and some examples of non-bipartite total $k$-uniform graphs, for even $k$, were also presented. Moreover, chordal total $k$-uniform graphs were characterized. (In particular, for $k \ge 3$ no such graphs exist). By the above observation, all total $k$-uniform graphs are also graphs with $\gamma_t(G)=\grz(G)$, but this property holds also for many other graphs. For instance, while complete multipartite graphs are the only connected totally 2-uniform graphs (see~\cite{bhr}), we have the following characterization of connected graphs with $\gamma_t(G)=\grz(G)=2$.

\begin{prop}\label{prp:2}
Let $G$ be a connected graph not isomorphic to a complete graph. Then $\gamma_t(G)=\grz(G)=2$ if and only if $N[x] \cup N[y]=V(G)$ holds for any non-twin vertices $x$ and $y$.
\end{prop}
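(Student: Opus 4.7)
The plan is to toggle between the combinatorial hypothesis on $N[x]\cup N[y]$ and the existence/non-existence of short Z-sequences, using the reformulation (via Equation~(\ref{eq:zeroZGrundy}) and the definition of a Z-sequence) that $\grz(G)\le 2$ is equivalent to ``no Z-sequence of length~$3$ exists'' while $\grz(G)\ge 2$ is equivalent to ``some Z-sequence of length $2$ exists''. For the forward direction, assume $\gamma_t(G)=\grz(G)=2$ and pick non-twin vertices $x,y$. Since they are not open twins, $N(x)\neq N(y)$, so there is a vertex $z$ in the symmetric difference, say $z\in N(x)\setminus N(y)$; as $y\notin N(y)$ we have $z\neq y$, hence $z\in N(x)\setminus N[y]$, so $(y,x)$ is a Z-sequence witnessed by $z$. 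Now assume for contradiction that some $v\notin N[x]\cup N[y]$ exists. Connectedness gives $v$ a neighbor $w$; since $v\notin N[x]\cup N[y]$, necessarily $w\notin\{x,y\}$, and then $v\in N(w)\setminus(N[x]\cup N[y])$ witnesses that $(y,x,w)$ is a Z-sequence of length~$3$, contradicting $\grz(G)=2$.

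For the reverse direction, I first show $\gamma_t(G)=2$. The key sub-claim is that $G$ contains an edge $ab$ between non-twin vertices: open twins are never adjacent, so it suffices to exclude the case where every edge joins two closed twins. But ``closed twin'' is an equivalence relation whose classes are cliques, so if every edge stayed inside a class, $G$ would be a disjoint union of cliques, and by connectedness $G$ would be a single clique -- contradicting the hypothesis. With such $a,b$ in hand, the hypothesis yields $N[a]\cup N[b]=V(G)$, and since $a\in N(b)$ and $b\in N(a)$, this promotes to $V(G)=N(a)\cup N(b)$, so $\{a,b\}$ is a TD-set. Combined with $\gamma_t(G)\ge 2$ (as $G$ has an edge), this gives $\gamma_t(G)=2$, and Theorem~\ref{thm:gammaZ-total} then supplies $\grz(G)\ge 2$.

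To finish, I would prove $\grz(G)\le 2$ by contradiction: if $(u_1,u_2,u_3)$ were a Z-sequence, the witness $w_3$ for step~$3$ would satisfy $w_3\notin N[u_1]\cup N[u_2]$, and the contrapositive of the hypothesis would force $u_1$ and $u_2$ to be twins. In the closed-twin case $N[u_1]=N[u_2]$, so $N(u_2)\subseteq N[u_2]=N[u_1]$; in the open-twin case $N(u_1)=N(u_2)$, so $N(u_2)=N(u_1)\subseteq N[u_1]$. Either way $N(u_2)\setminus N[u_1]=\emptyset$, contradicting the existence of the step-$2$ footprinting witness. The main obstacle I foresee is the structural observation that a connected non-complete graph always has an edge between non-twin vertices; once that is settled, the rest of the argument is a direct unwrapping of the Z-sequence condition at lengths $2$ and $3$.
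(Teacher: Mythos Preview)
Your proof is correct and follows essentially the same idea as the paper's: both directions pivot on whether a Z-sequence of length~$3$ can exist, and in both proofs the key observation is that if $(u_1,u_2,u_3)$ were a Z-sequence then $u_1,u_2$ would be forced to be twins, killing the step-$2$ witness. The one genuine difference is in how you obtain $\gamma_t(G)=2$ from the hypothesis. The paper first establishes $\grz(G)=2$ and then invokes Theorem~\ref{thm:gammaZ-total} to deduce $\gamma_t(G)\le\grz(G)=2$. You instead prove the structural fact that a connected non-complete graph must contain an edge $ab$ between non-twin vertices (via the closed-twin equivalence classes), and then apply the hypothesis directly to $\{a,b\}$ to exhibit a TD-set of size~$2$. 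Your route is slightly longer but more self-contained for this half of the argument; the paper's is shorter but leans on the earlier theorem. Either way Theorem~\ref{thm:gammaZ-total} ends up being used somewhere, so neither approach is strictly more elementary. Your forward direction is also a bit more explicit than the paper's in justifying why one of $(y,x)$ or $(x,y)$ is already a Z-sequence before extending it; the paper simply asserts that ``$(u,v,w)$ or $(v,u,w)$ is a Z-sequence'' without isolating the non-open-twin step.
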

\begin{proof}
First let $N[x] \cup N[y]=V(G)$ hold for any non-twin vertices $x$ and $y$. Suppose that there is a $Z$-sequence $(u,v,w)$ in $G$, implying that $N(w)\setminus (N[u] \cup N[v]) \ne \emptyset$. Since $N[x] \cup N[y]=V(G)$ holds for any non-twin vertices $x$ and $y$ of $G$, $u$ and $v$ must be twins. Hence $N(v) \setminus N[u]=\emptyset$, a contradiction. Hence, $\grz(G) \le 2$ and since $G$ is not complete, $\grz(G)=2$. By Theorem~\ref{thm:gammaZ-total}, $\gamma_t(G)=2$.

For the converse, let $\gamma_t(G)=\grz(G)=2$. Suppose that there exists non-twin vertices $u$ and $v$ with $N[u] \cup N[v] \ne V(G)$. Since $G$ is connected, there exists $w \in N(u) \cup N(v)$ such that $w$ has a neighbor $w' \in V(G) \setminus (N[u] \cup N[v])$. Then $(u,v,w)$ or $(v,u,w)$ is a Z-sequence of $G$ and hence $\grz(G)\ge 3$, a contradiction. \QED
\end{proof}
\bigskip

By the results proved in~\cite{bahadir-2020} we know that there are no total $k$-uniform graphs for odd $k$. We can easily find graphs with $\gamma_t(G)=\grz(G)=k$ for some odd~$k$. For example, $C_5$ has both Z-Grundy domination number and total domination number equal to~$3$. We prove next that there are no chordal graphs with $\gamma_t(G)=\grz(G)=3$.
To prove this we first need the following observation, where statement~(a) was proved in~\cite{kos}, while statement (b) is straightforward to verify.

\begin{ob}
\label{ob:simplicial}
If $G$ is a graph and $u$ a simplicial vertex of $G$, then \\ [-20pt]
\begin{enumerate}
\item[{\rm (a)}] $\grz(G)-1 \le \grz(G-u) \le \grz(G)$, and \1
\item[{\rm (b)}] $\gamma_t(G)-1 \le \gamma_t(G-u) \le \gamma_t(G)$.
\end{enumerate}
\end{ob}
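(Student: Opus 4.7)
Part (a) is cited from~\cite{kos}, so I only need to plan a proof of part (b); throughout I may assume that both $G$ and $G-u$ are isolate-free so that the total domination numbers are well defined.

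For the upper bound $\gamma_t(G-u)\le\gamma_t(G)$, the plan is to take a $\gamma_t$-set $D$ of $G$ and extract from it a TD-set of $G-u$ of cardinality at most $|D|$. If $u\notin D$, then $D$ itself is already a TD-set of $G-u$. If $u\in D$, then since $D$ totally dominates $u$, some neighbor $v$ of $u$ lies in $D$; because $u$ is simplicial, $N_G(u)$ induces a clique, so $v$ is adjacent to every other vertex of $N_G(u)$. It follows that every vertex $x\ne u$ previously totally dominated by $u$ is still totally dominated by $v\in D\setminus\{u\}$, while vertices outside $N_G(u)$ are unaffected. Hence $D\setminus\{u\}$ is a TD-set of $G-u$ of cardinality $|D|-1$.

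For the lower bound $\gamma_t(G-u)\ge\gamma_t(G)-1$, the plan is dual: starting from a $\gamma_t$-set $D^*$ of $G-u$, I would add an arbitrary neighbor $w\in N_G(u)$, which exists because $G$ is isolate-free. The set $D^*\cup\{w\}$ totally dominates $u$ through $w$, totally dominates $w$ via whichever neighbor of $w$ in $D^*$ witnesses that $D^*$ is a TD-set of $G-u$, and leaves every other vertex already totally dominated by $D^*$. Thus $\gamma_t(G)\le|D^*|+1=\gamma_t(G-u)+1$, as required. No substantive obstacle is anticipated; the simplicial hypothesis is used only in the upper-bound argument to guarantee that every vertex of $N_G(u)$ retains a dominator in $D\setminus\{u\}$ once $u$ is deleted.
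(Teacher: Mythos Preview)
The paper does not actually prove (b); it merely states that (b) is ``straightforward to verify.'' Your approach is the natural one, and your lower-bound argument is correct. However, there is a small gap in your upper-bound argument when $u\in D$: you claim that $D\setminus\{u\}$ is a TD-set of $G-u$, but the vertex $v$ itself may fail to be totally dominated. Your assertion that ``every vertex $x\ne u$ previously totally dominated by $u$ is still totally dominated by $v$'' breaks down for $x=v$, since $v$ cannot totally dominate itself. If $u$ happens to be the only $D$-neighbor of $v$ (for instance, take $G=P_3$ with $u$ a leaf and $D$ the two-element $\gamma_t$-set containing $u$), then $D\setminus\{u\}$ is not a TD-set of $G-u$.

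The fix is easy and costs nothing, since you only need a TD-set of $G-u$ of size at most $|D|$, not $|D|-1$. When $v$ has no neighbor in $D\setminus\{u\}$, pick any neighbor $w$ of $v$ in $G-u$ (one exists because $G-u$ is isolate-free) and use $(D\setminus\{u\})\cup\{w\}$ instead; then $w$ totally dominates $v$, and $v\in D\setminus\{u\}$ totally dominates $w$, while all other vertices are handled exactly as in your argument. This yields a TD-set of $G-u$ of cardinality at most $|D|=\gamma_t(G)$.
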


\begin{thm}
\label{thm:chordal}
If $G$ is a connected graph that contains a simplicial vertex, then $\gamma_t(G) \ne 3$ or $\grz(G) \ne 3$.
\end{thm}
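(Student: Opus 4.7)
The plan is to argue by contradiction. Assume $\gamma_t(G)=\grz(G)=3$, let $u$ be a simplicial vertex of $G$, and fix a $\gamma_t$-set $D=\{d_1,d_2,d_3\}$; I will exhibit a Z-sequence of length $4$ in $G$, contradicting $\grz(G)=3$. Since $G[D]$ is isolate-free and $|D|=3$, either $G[D]\cong P_3$ (with edges $d_1d_2$ and $d_2d_3$) or $G[D]\cong K_3$, and minimality of $D$ guarantees that every $d_i$ has a nonempty private neighborhood.

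The first step is to rule out $u\in D$. If $G[D]\cong P_3$ with $u=d_2$, then $d_1,d_3\in N(u)$ while $d_1d_3\notin E(G)$, contradicting that $N(u)$ is a clique. In every remaining situation with $u\in D$ (namely, $u$ an endpoint of the $P_3$-component or any vertex of the $K_3$-component), a direct check gives $\ipn(u,D)=\emptyset$; minimality of $D$ then forces the existence of some $w\in\epn(u,D)$. But $w\in N(u)\setminus D$ together with any $d_j\in D\cap N(u)$ (such a $d_j\neq u$ exists in both remaining scenarios) must satisfy $wd_j\in E(G)$ by the clique $N(u)$, giving $w\in N[d_j]$ and contradicting $w\in\epn(u,D)$. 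Hence $u\notin D$.

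Since $u$ is totally dominated by $D$ and $N(u)$ is a clique, $N(u)\cap D$ is a nonempty clique in $G[D]$. My candidate Z-sequence is $(u,d_{\sigma(1)},d_{\sigma(2)},d_{\sigma(3)})$ for a suitably chosen permutation $\sigma$ of $\{1,2,3\}$. The key observation is: for every $d_i\in D$ and every $d_i'\in\epn(d_i,D)$, we have $d_i'\notin N[u]$ whenever $N(u)\cap D\neq\{d_i\}$. Indeed, $d_i'\in N(u)$ would force $d_i'd_j\in E(G)$ for any $d_j\in (N(u)\cap D)\setminus\{d_i\}$ via the clique $N(u)$, contradicting $d_i'\notin N[d_j]$; and $d_i'=u$ would require $N(u)\cap D=\{d_i\}$. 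Using this observation, together with the fact that in Case $P_3$ the set $N(u)\cap D$ cannot contain both endpoints $d_1$ and $d_3$, one selects $\sigma$ so that every step of the sequence is justified: either the next $d$-vertex lies outside the already-covered neighborhoods, or the current $d_{\sigma(j)}$ footprints an external private neighbor that the observation places outside $N[u]\cup\bigcup_{k<j}N[d_{\sigma(k)}]$.

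The main obstacle is the case-by-case verification. For $G[D]\cong K_3$ the ordering $\sigma=(1,2,3)$ works in each of the three situations $|N(u)\cap D|\in\{1,2,3\}$, relying on external private neighbors of $d_2$ and $d_3$ precisely in those steps where the next $d$-vertex is already in $N[d_{\sigma(1)}]$. For $G[D]\cong P_3$, one takes $\sigma=(2,1,3)$ when $N(u)\cap D=\{d_2\}$, and $\sigma=(1,2,3)$ (after possibly swapping $d_1$ and $d_3$) in the feasible cases $N(u)\cap D\in\{\{d_1\},\{d_1,d_2\}\}$. Each subcase reduces to a short check based on the structural observation above, and the resulting $\grz(G)\ge 4$ contradicts $\grz(G)=3$, completing the proof.
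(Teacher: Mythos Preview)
Your proof is correct and takes a genuinely different route from the paper's. The paper argues via a minimal counterexample: it fixes a simplicial vertex $x$, analyzes the partition of $N[x]$ into closed twins of $x$ and the rest, deletes $x$, and uses the auxiliary fact (Observation~\ref{ob:simplicial}) that both $\gamma_t$ and $\grz$ drop by at most one when a simplicial vertex is removed, eventually building a Z-sequence of length~$4$ from the resulting structural information about $H=G-x$. Your argument is entirely direct and self-contained: you fix a $\gamma_t$-set $D$, show the simplicial vertex $u$ cannot lie in $D$, and then exhibit the Z-sequence $(u,d_{\sigma(1)},d_{\sigma(2)},d_{\sigma(3)})$ by a short case analysis on the shape of $G[D]\in\{P_3,K_3\}$ and the clique $N(u)\cap D$. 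The key observation that any $d_i'\in\epn(d_i,D)$ lies outside $N[u]$ whenever $N(u)\cap D\neq\{d_i\}$ is exactly what makes each step go through, and your enumeration of the feasible sets $N(u)\cap D$ (noting that it must be a clique of $G[D]$, so $\{d_1,d_3\}$ and $\{d_1,d_2,d_3\}$ are excluded in the $P_3$ case) is complete. Your approach avoids any induction or appeal to how the invariants behave under vertex deletion; the paper's approach, on the other hand, illustrates a reusable technique (reduction by simplicial-vertex removal) that may be handy in related settings.
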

\begin{proof}
Suppose, to the contrary, that there are connected graphs having simplicial vertices with both Z-Grundy domination number and total domination number equal to~$3$. Among all such graphs $G$ that contain a simplicial vertex and satisfy $\gamma_t(G)=\grz(G)=3$, let $G$ be chosen to have minimum order. Clearly, $G$ is not a complete graph.  Let $x$ be a simplicial vertex of $G$, and let $X=N[x]$, and so $X$ induces a clique. Let $x_1,x_2$ be two arbitrary vertices from $X \setminus \{x\}$.

If $(x,x_1,x_2)$ is a Z-sequence of $G$, then since $\grz(G)=3$ we infer that $N[x] \cup N[x_1] \cup N[x_2] = V(G)$, implying that $\{x_1,x_2\}$ is a TD-set of $G$, a contradiction. Hence, $(x,x_1,x_2)$ is not a Z-sequence of $G$. Since $x_1$ and $x_2$ were arbitrary, neither $(x,x_1,x_2)$ nor $(x,x_2,x_1)$ is a Z-sequence of $G$. Thus since $X$ induces a clique, we infer that $N[x_1]=N[x]$ or $N[x_1]=N[x_2]$. We can therefore partition $X$ into two sets, $A=\{a \in X \, \colon N[a]=N[x]\}$ and $B=X\setminus A$. Since $G$ is not complete, the set $B$ is not empty. If $|B| \ge 2$, then choosing $\{x_1,x_2\} \subseteq B$, the sequence $(x,x_2,x_1)$ is not a Z-sequence of $G$, implying that $N[x_1]=N[x_2]$, and so $x_1$ and $x_2$ are twins in $G$. Thus if $|B| \ge 2$, then any two vertices of $B$ are twins in $G$. Let $b \in B$ and let $C = N(b) \setminus N[x]$, which is clearly non-empty. By our earlier observations, $X = A \cup B$ is a clique, $N[a] = X$ for all $a \in A$, and $N[b] = A \cup B \cup C$ for all $b \in B$.

Let $H = G-x$. By our earlier properties, the graph $H$ is a connected isolate-free graph. Further, $H$ is not a complete graph. Hence by Theorem~\ref{thm:gammaZ-total}, $\grz(H) \ge \gamma_t(H) \ge 2$. By Observation~\ref{ob:simplicial}, we have $\gamma_t(H) \le 3$ and $\grz(H) \le 3$. If $\gamma_t(H) = 3$, then $\grz(H) = 3$, contradicting the minimality of $G$. Hence, $\gamma_t(H) = 2$ and either $\grz(H) = 2$ or $\grz(H) = 3$. Let $\{u_1,u_2\}$ be an arbitrary $\gamma_t$-set of $H$. If $\{u_1,u_2\} \cap X \ne \emptyset$, then $\{u_1,u_2\}$ is a TD-set of $G$, and so $\gamma_t(G) = 2$, a contradiction. Hence, every $\gamma_t$-set of $H$ has an empty intersection with $X$. Therefore, $A = \{x\}$, and $\{b,c\}$ is not a TD-set of $H$ for any $b \in B$ and $c \in C$. We now select $b \in B$. Since $N[b] = B \cup C$ for all $b \in B$ and since $\{b,c\}$ is not a TD-set of $H$ for any $b\in B$ and $c\in C$, we can select $c \in C$ in such a way that $D = N[c] \setminus (B \cup C)$ is non-empty. In addition, there exists vertices $d\in D$ and $e\notin B\cup C\cup D$ such that $e$ is adjacent to $d$ or to a vertex $c'\in C$. Therefore, $(x,b,c,d)$ or $(x,b,c,c')$ is a Z-sequence, where vertex $x$ footprints $b$, vertex $b$ footprints $c$, vertex $c$ footprints $d$, and, finally, vertex $d$ or vertex $c'$ footprints $e$. Hence, $\grz(G) \ge 4$, a contradiction. \QED
\end{proof}

\medskip
Since every chordal graph which is not complete has a simplicial vertex, as an immediate consequence of Theorem~\ref{thm:chordal} we have the following result.

\begin{cor}
There is no connected chordal graph $G$ such that $\gamma_t(G) = \grz(G) = 3$.
\end{cor}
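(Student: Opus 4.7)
The plan is to derive the corollary directly from Theorem~\ref{thm:chordal} by invoking the classical fact (due to Dirac) that every chordal graph admits a simplicial vertex; in fact, every non-complete chordal graph has at least two non-adjacent simplicial vertices. Since Theorem~\ref{thm:chordal} applies to any connected graph that contains a simplicial vertex, the hypothesis of the corollary places us squarely in its scope.

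More concretely, first I would let $G$ be a connected chordal graph. If $G$ is complete, then $\gamma_t(G) = 2$ (for $n(G) \ge 2$), and so the conclusion $\gamma_t(G) \neq 3$ holds trivially. Otherwise, $G$ is non-complete and chordal, and the classical simplicial vertex result guarantees the existence of a simplicial vertex $x \in V(G)$. Applying Theorem~\ref{thm:chordal} then yields $\gamma_t(G) \neq 3$ or $\grz(G) \neq 3$, which is precisely the statement of the corollary.

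There is essentially no obstacle here since all the work has already been done in Theorem~\ref{thm:chordal}; the only point worth mentioning explicitly is the appeal to the existence of a simplicial vertex in chordal graphs, which is a standard structural fact and requires no further argument.
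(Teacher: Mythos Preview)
Your proposal is correct and matches the paper's own reasoning: the paper simply remarks that every chordal graph which is not complete has a simplicial vertex, so the corollary is an immediate consequence of Theorem~\ref{thm:chordal}. Your explicit handling of the complete case is fine but in fact unnecessary, since every vertex of a complete graph is simplicial and Theorem~\ref{thm:chordal} already applies there as well.
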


We already know~\cite{bahadir-2020} that there are no total $k$-uniform chordal graphs for $k \ge 3$. This raises the following question.

\begin{prob}
Is there a connected chordal graph $G$ with $\gamma_t(G)=\grz(G)=k$ for $k \ge 4$?
\end{prob}

We end the section with the following problem that arises from the above discussion.

\begin{prob}
\label{prob:gammaZ-total}
Characterize the extremal graphs attaining the bound in Theorem~\ref{thm:gammaZ-total}.
\end{prob}

From earlier papers one can suspect that finding a characterization of graphs $G$ with $\grt(G)=\gamma_t(G)$ should be difficult. Thus we expect that Problem~\ref{prob:gammaZ-total} will also not be easy.

%%%%%%%%%%%%%%%%%%%%%
\section{Zero forcing and upper total domination}
\label{sec:uppertotal}
%%%%%%%%%%%%%%%%%%%%%%

Comparing the Z-Grundy domination number with the upper total domination number of a graph, we note that the inequality in Theorem~\ref{thm:gammaZ-total} cannot be improved by replacing $\gamma_t$ with $\Gamma_t$. In other words, there exist graphs such that the upper total domination number is larger than the Z-Grundy domination number, and the difference can even be arbitrarily large. For instance, the \emph{windmill graph} $\Wd(k,n)$, where $k \ge 3$ and $n\ge 2$, is obtained by taking $n$ vertex disjoint copies of the complete graph $K_k$, selecting one vertex from each copy, and identifying these $n$ selected vertices into one new vertex (that is a universal vertex of degree~$n(k-1)$). The resulting graph windmill graph $G = \Wd(k,n)$ satisfies $\grz(G)=n$ and $\Gamma_t(G)=2n$. This yields the following result.

\begin{ob}
\label{ob:ratio=1}
There exists an infinite family of connected isolate-free graphs $G$ satisfying $\Gamma_t(G) = 2\grz(G)$.
%, that is, $\frac{\Gamma_t(G)}{\grz(G)} = 2$.
\end{ob}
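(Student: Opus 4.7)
\noindent\textbf{Proof plan for Observation~\ref{ob:ratio=1}.}
The plan is to verify, for the windmill graph $G = \Wd(k,n)$ with $k\ge 3$ and $n\ge 2$, the two equalities $\grz(G)=n$ and $\Gamma_t(G)=2n$; then the family $\{\Wd(k,n)\}_{n\ge 2}$ provides the claim. Throughout, I write $u$ for the universal vertex of $G$ and $C_1,\dots,C_n$ for the $n$ copies of $K_k$, so that $V(C_i)$ contains $u$ together with $k-1$ other vertices, pairwise adjacent inside $C_i$ and (outside $u$) adjacent to nothing in $C_j$ for $j\ne i$.

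For $\grz(G)=n$, the lower bound is handled by exhibiting an explicit Z-sequence: pick one non-$u$ vertex $v_i\in V(C_i)\setminus\{u\}$ for each $i\in[n]$ and order them as $(v_1,\dots,v_n)$. Since $k\ge 3$, every $v_i$ (for $i\ge 2$) has a neighbor inside $C_i\setminus\{u\}$ that is not contained in any $N[v_j]$ with $j<i$, so~\eqref{eq:defZGrundy} is satisfied. For the matching upper bound, I would argue that any Z-sequence contains at most one vertex from each $C_i$: as $C_i$ is a clique, once some $v_j\in V(C_i)$ enters the sequence we have $V(C_i)\subseteq N[v_j]$, so no later vertex from $C_i$ can footprint anything. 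Separately, once $u$ is added, $N[u]=V(G)$ and the sequence terminates. Combining these two observations shows that every Z-sequence has length at most $n$.

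For $\Gamma_t(G)=2n$, the lower bound is witnessed by the set $D$ consisting of any two non-$u$ vertices from each copy. It is a TD-set because the two chosen vertices in $C_i$ totally dominate each other and every other vertex of $C_i\setminus\{u\}$, while $u$ is totally dominated by any of them; it is minimal because the two vertices in $D\cap (V(C_i)\setminus\{u\})$ are internal $D$-private neighbors of each other, so removing either one leaves the remaining one without any neighbor in the reduced set. The upper bound is the main case analysis. Let $D$ be a minimal TD-set and split on whether $u\in D$. If $u\in D$, then every $v\in D\setminus\{u\}$ has $u\in N(v)\cap D$, which forces any (internal or external) private neighbor of $v$ to coincide with $u$; using the private-neighbor characterization of minimal TD-sets recalled in Section~\ref{sec:def}, this forces $D=\{u,v\}$, so $|D|\le 2\le 2n$. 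If $u\notin D$, then for each copy $C_i$ the set $D\cap (V(C_i)\setminus\{u\})$ must have size at least~$2$ (one vertex to totally dominate the next, since $u\notin D$) and at most~$2$ (if three vertices lay in the same clique $C_i$, the private-neighbor analysis—checking external candidates inside $C_i$, in other copies, and at $u$—shows none can exist for the middle vertex), giving $|D|=2n$.

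The only genuinely delicate step, and what I expect to be the main obstacle, is the upper bound $|D\cap V(C_i)|\le 2$ in the case $u\notin D$. The argument must rule out both external and internal private neighbors for a hypothetical third vertex in $D\cap V(C_i)$, which requires carefully tracking where the vertices adjacent to $C_i$ can lie: inside $C_i$ (where they see all three candidates), at $u$ (not in $D$ by assumption, and anyway adjacent to all of $D$), or in another copy (not adjacent to $C_i\setminus\{u\}$ at all). Once this clique-based private-neighbor bookkeeping is in place, the two equalities combine to give $\Gamma_t(G)=2n=2\grz(G)$, and letting $n$ range over integers $\ge 2$ yields the advertised infinite family. \QED
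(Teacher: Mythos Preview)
Your plan follows the paper exactly: the paper simply asserts, without argument, that the windmill graph $G=\Wd(k,n)$ satisfies $\grz(G)=n$ and $\Gamma_t(G)=2n$, and you supply the omitted verification. Your $\Gamma_t$ computation and the $\grz$ lower bound are correct as written.

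There is one small slip in the $\grz$ upper bound. The claim ``any Z-sequence contains at most one vertex from each $C_i$'' is false as stated, because $u$ belongs to every $C_i$: for instance, $(v_1,u)$ with $v_1\in V(C_1)\setminus\{u\}$ is a legal Z-sequence with both entries in $V(C_1)$, and your justification ``no later vertex from $C_i$ can footprint anything'' fails precisely when that later vertex is $u$, which may still footprint vertices in some $C_j$ with $j\ne i$. The fix is immediate: your clique argument does show that the \emph{non-$u$} entries of any Z-sequence lie in pairwise distinct copies, and if $u$ appears it must be last, with the preceding non-$u$ entries occupying at most $n-1$ copies (otherwise $\bigcup_{j}N[v_j]=V(G)$ already and $u$ footprints nothing). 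Either way the length is at most~$n$.
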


By Observation~\ref{ob:ratio=1}, if $G$ is a connected isolate-free graph, then the ratio $\frac{\Gamma_t(G)}{\grz(G)}$ can be as large as $2$. However, we next prove that this ratio cannot exceed~$2$.

\begin{thm}
\label{thm:gammaZ-uppertotal}
If $G$ is an isolate-free graph, then $\Gamma_t(G) \le 2\grz(G)$.
\end{thm}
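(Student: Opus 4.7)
The plan is to let $D$ be a $\Gamma_t$-set of $G$ and to construct a Z-sequence $S$ with $|\widehat{S}| \ge |D|/2$; combined with~(\ref{eq:zeroZGrundy}), this will give $\grz(G) \ge \Gamma_t(G)/2$ and hence the desired bound. Since $D$ is a TD-set, the graph $G[D]$ is isolate-free and every component of $G[D]$ has at least two vertices. Let $k$ denote the number of components of $G[D]$ isomorphic to $K_2$; the remaining components have at least three vertices. The $K_2$-components account for $2k \le |D|$ vertices, so $|D| - k \ge |D|/2$, and it will suffice to produce a Z-sequence of length $|D| - k$.

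From each $K_2$-component $\{x,y\}$ of $G[D]$ we plan to include only $x$ in $S$; it footprints its internal private neighbor $y$. From each component $C$ of $G[D]$ with $|V(C)| \ge 3$ we plan to include all $|V(C)|$ vertices of $C$, ordered so that every non-leaf of $C$ precedes every leaf of $C$. Each non-leaf $v$ of $C$ will either have an external private neighbor (which $v$ footprints) or, by Observation~\ref{ob:private}, be adjacent to a leaf of $C$ that is an internal private neighbor of $v$ and that $v$ footprints. Each leaf $\ell$ of such a component $C$ will have no internal private neighbor, because its unique $D$-neighbor $v$ lies in $C$ and has degree at least two in $C$; the minimality of $D$ then forces $\ell$ to have an external private neighbor, which $\ell$ footprints.

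The main verification is that each chosen footprint is fresh, i.e., not contained in the closed neighborhood of any previously added vertex of $S$. For an external private neighbor $e$ of some $v \in D$, we have $N(e) \cap D = \{v\}$ and $e \notin D$, so $e \in N[u]$ with $u \in D$ forces $u = v$. For a leaf $\ell$ of $C$ serving as an internal private neighbor of its unique $C$-neighbor $v$, $\ell \in N[u]$ with $u \in D$ forces $u \in \{\ell, v\}$, and the non-leaves-before-leaves ordering within $C$ ensures $\ell$ is added only after $v$. In particular, a footprint chosen inside one component of $G[D]$ never lies in the closed neighborhood of a $D$-vertex belonging to another component, so the local constructions assemble into a single valid Z-sequence of length $|D| - k$. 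The main obstacle is precisely this interaction across components and between leaves and non-leaves inside the same component, and it is dispatched uniformly by the fact that every private neighbor is adjacent to exactly one vertex of $D$.
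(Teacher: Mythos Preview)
Your proof is correct and takes essentially the same approach as the paper: pick a $\Gamma_t$-set $D$, contribute one vertex from each $K_2$-component of $G[D]$ and all vertices from each larger component, using private neighbors (internal for leaves, external otherwise) as footprints. The only cosmetic difference is the ordering inside a large component---you put non-leaves before leaves, whereas the paper puts neighbors-of-leaves before the rest---but both orderings work for the same reason, namely that every designated footprint has a unique $D$-neighbor; your reference to~(\ref{eq:zeroZGrundy}) is also superfluous, since $|\widehat{S}|\ge |D|/2$ yields $\grz(G)\ge \Gamma_t(G)/2$ directly.
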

\begin{proof}
We will use (a simplified version of) the setting described in the proof of Theorem~\ref{thm:gammaZ-total}. Let $D$ be a $\Gamma_t$-set of $G$, and so $D$ is a minimal TD-set of $G$ of cardinality $\Gamma_t(G)$, and consider the components of the induced subgraph $G[D]$. We will construct a Z-sequence $S$ of $G$, with $|\widehat{S}| \ge |D|/2$.

Let ${\cal C}_1$ be the set of all components of $G[D]$ with more than two vertices, and let ${\cal C}_2$ be the set of all $K_2$-components of $G[D]$. Note that $\sum_{C\in {\cal C}_1\cup {\cal C}_2 }{|V(C)|}=|D|$.

If $C\in {\cal C}_1$ then, in the same way as in the proof of Theorem~\ref{thm:gammaZ-total}, we can add $|V(C)|$ vertices to $S$ (by first adding to $S$ all vertices of $C$ that are neighbors of leaves of $C$, and then all other vertices of $C$). After dealing in this way with all components from ${\cal C}_1$, we focus on the remaining components of $G[D]$ (having two vertices). From each $C\in {\cal C}_2$, we can add at least one vertex to the sequence $S$ in such a way that the added vertex footprints its neighbor in $D$. The resulting sequence $S$ is a Z-sequence, and so $\grz(G)\ge |\widehat{S}|$. On the other hand, \[
|\widehat{S}| = \sum_{C\in {\cal C}_1}{|V(C)|} + \sum_{C\in {\cal C}_2}{\frac{|V(C)|}{2}} \ge \sum_{C\in {\cal C}_1\cup {\cal C}_2 }{\frac{|V(C)|}{2}} = \frac{|D|}{2} = \frac{\Gamma_t(G)}{2}.
\]
This implies that $\grz(G)\ge \frac{1}{2}\Gamma_t(G)$, as claimed.
\QED
\end{proof}

\bigskip
As an immediate consequence of Corollary~\ref{thm:gammaZ-uppertotal}, we have the following corollary.

\begin{cor}
\label{cor:ZF-uppertotal}
If $G$ is an isolate-free graph, then $Z(G)\le n(G)-\frac{\Gamma_t(G)}{2}$.
\end{cor}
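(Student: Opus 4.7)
The corollary is a direct translation of Theorem~\ref{thm:gammaZ-uppertotal} into the language of zero forcing via the duality identity. The plan is simply to combine the bound $\Gamma_t(G)\le 2\grz(G)$ from Theorem~\ref{thm:gammaZ-uppertotal} with Equation~\eqref{eq:zeroZGrundy}, namely $Z(G)=n(G)-\grz(G)$, which is valid for every graph (and in particular for every isolate-free $G$).

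Concretely, I would start from $\grz(G)\ge \frac{\Gamma_t(G)}{2}$, multiply through only in the sense of rearranging, and then subtract both sides from $n(G)$ to obtain $n(G)-\grz(G)\le n(G)-\frac{\Gamma_t(G)}{2}$. Substituting the left-hand side using \eqref{eq:zeroZGrundy} yields $Z(G)\le n(G)-\frac{\Gamma_t(G)}{2}$, which is the desired inequality. No additional hypothesis beyond being isolate-free is needed, since Theorem~\ref{thm:gammaZ-uppertotal} is stated under exactly that assumption and \eqref{eq:zeroZGrundy} holds in full generality.

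There is no real obstacle here: once Theorem~\ref{thm:gammaZ-uppertotal} is in hand, the corollary is a two-line deduction. The only thing to be mindful of is that we are applying the equality $Z(G)=n(G)-\grz(G)$ in the version of $\grz$ that permits isolated vertices in the definition (as noted in the introduction), but since $G$ itself is assumed isolate-free, the two conventions coincide and no care is needed.
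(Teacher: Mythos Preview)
Your proposal is correct and matches the paper's own treatment: the corollary is stated there as an immediate consequence of Theorem~\ref{thm:gammaZ-uppertotal}, obtained exactly by combining $\Gamma_t(G)\le 2\grz(G)$ with the identity $Z(G)=n(G)-\grz(G)$ from Equation~\eqref{eq:zeroZGrundy}.
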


By the example leading to Observation~\ref{ob:ratio=1}, the windmill graphs $G = \Wd(k,n)$ where $k \ge 3$ and $n\ge 2$ attain the upper bound in Theorem~\ref{thm:gammaZ-uppertotal}. Next, we present some properties of graphs $G$ with $\Gamma_t(G) = 2\grz(G)$.  We will again use the following notation.

Let $D$ be a $\Gamma_t$-set of an isolate-free graph $G$, and let $C_1,\ldots,C_\ell$ be the $K_2$-components of $G[D]$ where $V(C_i) = \{x_i,y_i\}$ for $i \in [\ell]$. For each $i\in [\ell]$, let $A_i(D)$ denote the set of vertices that are totally dominated by $V(C_i)$ and are not totally dominated by $D\setminus V(C_i)$.  In particular, $V(C_i) \subseteq A_i(D)$ for all $i\in [\ell]$, since both vertices in $V(C_i)$ are $D$-internal private neighbors to each other. From the proof of Theorem~\ref{thm:gammaZ-total} and Theorem~\ref{thm:gammaZ-uppertotal}, we immediately get the following.

\begin{lemma}
\label{l:K2}
If $G$ is a graph with  $\Gamma_t(G)=2\grz(G)$ and $D$ is a $\Gamma_t$-set of $G$, then each component of $G[D]$ is isomorphic to $K_2$ and $N[x_i] \cap A_i(D)=N[y_i] \cap A_i(D)$ for all $i\in[\ell]$.
\end{lemma}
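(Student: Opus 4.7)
My plan is to extract both conclusions by tracing through the proof of Theorem~\ref{thm:gammaZ-uppertotal} and identifying where slack appears in the inequality chain. Writing $\mathcal{C}_1$ for the components of $G[D]$ with at least three vertices and $\mathcal{C}_2$ for the $K_2$-components of $G[D]$, the construction in that proof yields a Z-sequence $S$ with
\[
|\widehat{S}| \;\ge\; \sum_{C\in \mathcal{C}_1} |V(C)| \;+\; \sum_{C\in \mathcal{C}_2} 1.
\]
Since $\grz(G) \ge |\widehat{S}|$ and the hypothesis gives $\grz(G) = \Gamma_t(G)/2 = |D|/2 = \sum_{C\in \mathcal{C}_1\cup\mathcal{C}_2}|V(C)|/2$, I would rearrange this inequality to obtain $\sum_{C\in \mathcal{C}_1} |V(C)|/2 \le 0$, which forces $\mathcal{C}_1 = \emptyset$. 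This establishes the first conclusion: every component of $G[D]$ is a $K_2$.

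For the second conclusion, I would argue by contradiction: suppose there exists $i\in[\ell]$ with $N[x_i]\cap A_i(D) \ne N[y_i]\cap A_i(D)$. Since $V(C_i)\subseteq N[x_i]\cap N[y_i]$, this inequality is equivalent (by symmetry, after possibly swapping $x_i$ and $y_i$) to the existence of a vertex $a\in (N(x_i)\setminus N[y_i])\cap A_i$, as in Equation~(\ref{eq:property3}) of the proof of Theorem~\ref{thm:gammaZ-total}. I would then modify the construction of $S$ used in Theorem~\ref{thm:gammaZ-uppertotal}: when processing the $K_2$-component $C_i$, instead of adding only one vertex, add $y_i$ followed by $x_i$. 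The key verifications are that $x_i$ is a valid footprint for $y_i$ (because $x_i$ is a $D$-internal private neighbor of $y_i$, so $x_i \notin N[v]$ for any previously added $v$; this uses that $x_i\in A_i$ has no neighbor in $D\setminus V(C_i)$) and that $a$ is a valid footprint for $x_i$ (because $a\notin D$, $a\notin N[y_i]$ by choice, and $a$ has no neighbor in $D\setminus V(C_i)$ since $a\in A_i$, so $a$ has not yet been dominated).

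This modified sequence gains an extra vertex from $C_i$ compared to the original construction, yielding $|\widehat{S}| \ge \ell + 1 > \ell = |D|/2 = \grz(G)$, a contradiction. Hence $N[x_i]\cap A_i(D) = N[y_i]\cap A_i(D)$ for every $i\in[\ell]$, completing the proof. The only subtle point I foresee is carefully justifying that the vertex $a$ (used as a new footprint for $x_i$) was not already footprinted by a previously added vertex from another component $C_j$; this reduces to the observation that $A_i\setminus V(C_i)$ is disjoint from $D$ and has no neighbor in $D\setminus V(C_i)$, both of which follow directly from the definition of $A_i(D)$ together with the fact that $C_i$ is a component of $G[D]$.
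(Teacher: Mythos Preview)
Your proposal is correct and takes essentially the same approach as the paper. The paper's proof is simply the one-line remark that the lemma follows ``from the proof of Theorem~\ref{thm:gammaZ-total} and Theorem~\ref{thm:gammaZ-uppertotal},'' and your argument is precisely the unpacking of that remark: the first conclusion comes from the slack analysis in the proof of Theorem~\ref{thm:gammaZ-uppertotal}, and the second from the construction around Equation~(\ref{eq:property3}) in the proof of Theorem~\ref{thm:gammaZ-total}.
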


We note that Lemma~\ref{l:K2} implies that $V(C_1) \cup \cdots \cup V(C_\ell)=D$.

\begin{lemma}
\label{l:cliques}
If $G$ is a graph with  $\Gamma_t(G)=2\grz(G)$ and $D$ is a $\Gamma_t$-set of $G$, then the following properties hold. \\ [-20pt]
\begin{enumerate}
\item[{\rm (a)}] $A_i(D)$ induces a clique for all $i \in [\ell]$, and
\item[{\rm (b)}] there are no edges between $A_i(D)$ and $A_j(D)$ for all $i$ and $j$ where $1 \le i < j \le \ell$.
\end{enumerate}

\end{lemma}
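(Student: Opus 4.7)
Both (a) and (b) will be proved by contradiction: assuming the conclusion fails, I will exhibit a Z-sequence of $G$ of length $\ell+1$. Since Lemma~\ref{l:K2} forces every component of $G[D]$ to be isomorphic to $K_2$, we have $|D|=2\ell$, and the hypothesis $\Gamma_t(G)=2\grz(G)$ gives $\grz(G)=\ell$, so any such sequence contradicts the maximality of $\grz(G)$. A standing consequence of Lemma~\ref{l:K2} that I use throughout is that every $v\in A_i(D)\setminus V(C_i)$ satisfies $v\sim x_i$ and $v\sim y_i$: since $v$ is totally dominated by $V(C_i)$ we have $v\in N[x_i]\cup N[y_i]$, and by Lemma~\ref{l:K2} the vertex $v$ then lies in both $N[x_i]$ and $N[y_i]$; as $v\notin V(C_i)$, the adjacencies follow. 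Moreover, $v\notin D$ and $v$ has no neighbor in $D\setminus V(C_i)$.

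For (a), suppose $a,b\in A_i(D)$ with $a\not\sim b$. First, both $a$ and $b$ must lie in $A_i(D)\setminus V(C_i)$: if both were in $V(C_i)$ they would be adjacent via the edge $x_iy_i$, and if only one were (say $b$), then the fact above applied to $a$ would force $a\sim b$. Now take the sequence that first lists $x_k$ for $k\in[\ell]\setminus\{i\}$ (in arbitrary order), then $a$, then $y_i$. Each $x_k$ footprints $y_k$; the vertex $a$ footprints $x_i$ (since $a\sim x_i$ and $x_i\notin\bigcup_{k\ne i}N[x_k]$); and the final vertex $y_i$ footprints $b$, because $b\sim y_i$, $b\notin N[a]$ (as $a\ne b$ and $a\not\sim b$), and $b\notin\bigcup_{k\ne i}N[x_k]$ (since $b\in A_i(D)\setminus V(C_i)$ has no neighbor in $D\setminus V(C_i)$ and is not in $D$). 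This is a Z-sequence of length $\ell+1$.

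For (b), suppose $a\in A_i(D)$ and $b\in A_j(D)$ with $i\ne j$ and $a\sim b$. If $a\in V(C_i)$, then $a$ would be a vertex of $D\setminus V(C_j)$ adjacent to $b\in A_j(D)$, contradicting the definition of $A_j(D)$; so $a\in A_i(D)\setminus V(C_i)$, and symmetrically $b\in A_j(D)\setminus V(C_j)$. Form the sequence consisting of $x_k$ for $k\in[\ell]\setminus\{i,j\}$ (in arbitrary order), then $x_i$, then $a$, then $x_j$. The first $\ell-2$ terms footprint their respective $y_k$, and $x_i$ footprints $y_i$. The crucial step is that $a$ footprints $b$: since $b\in A_j(D)\setminus V(C_j)$, we have $b\notin D$ and $b\not\sim x_k$ for every $k\ne j$, so $b$ lies outside the union $\bigcup_{k\in[\ell]\setminus\{i,j\}}N[x_k]\cup N[x_i]$ of all previously-added closed neighborhoods. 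Finally $x_j$ footprints $y_j$, using that $y_j\not\sim x_k$ for $k\ne j$, $y_j\ne x_i$, and $y_j\not\sim a$ (since $a$ has no neighbor in $D\setminus V(C_i)$). Again the length is $\ell+1$.

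The main obstacle is identifying the correct Z-sequence; in part (b) in particular, the key insight is that one has to insert the ``bridging'' vertex $a\in A_i(D)$ between the processing of components $C_i$ and $C_j$, so that the edge $ab$ can be exploited to footprint $b$ at the precise moment when $b$ still lies outside every earlier closed neighborhood.
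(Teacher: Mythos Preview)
Your proof is correct and follows essentially the same approach as the paper: in both parts you construct a Z-sequence of length $\ell+1$ by having each $x_k$ footprint $y_k$ and inserting one extra vertex exploiting the violation of (a) or (b). The paper uses the sequences $(a,x_1,\ldots,x_\ell)$ for (a) and $(x_1,\ldots,x_i,a,x_{i+1},\ldots,x_\ell)$ for (b), which differ from yours only in the ordering, and your explicit justification that $a,b\notin V(C_i)$ (respectively $a\notin V(C_i)$, $b\notin V(C_j)$) is a detail the paper leaves implicit.
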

\begin{proof}
Let $G$ be a graph with  $\Gamma_t(G) = 2\grz(G)$ and let $D$ be a $\Gamma_t$-set of $G$. It follows from Lemma~\ref{l:K2} that each component $C_i$ of $G[D]$ is a $K_2$-component, implying that $\Gamma_t(G)=2\ell$. Further, $N[x_i] \cap A_i(D)=N[y_i] \cap A_i(D)$ for all $i \in [\ell]$. Suppose that there exist $a,b \in A_i(D)$ such that $ab \notin E(G)$. Renaming components if necessary, we may assume that $i = 1$. Thus, $(a,x_1,x_2,\ldots , x_\ell)$  is a Z-sequence of $G$, since vertex $a$ footprints $x_1$, vertex $x_1$ footprints $b$, and vertex $x_j$ footprints $y_j$ for all $j \ge 2$. Thus, $\grz(G) \ge \ell + 1$, and so, $\gamma_t(G) = 2\ell < 2\grz(G)$, a contradiction. Hence, $A_i(D)$ induces a clique for all $i \in [\ell]$. This proves part~(a).

To prove part~(b), suppose that there exists an edge $e=ab$ between $A_i(D)$ and $A_j(D)$ for some $i$ and $j$ where $1 \le i < j \le \ell$, where $a \in A_i(D)$ and $b \in A_j(D)$. Since $A_i(D)$ is, by definition, the set of neighbors of $\{x_i,y_i\}$ that are not totally dominated by $D\setminus \{x_i,y_i\}$, no vertex from the set $\{x_i,y_i,x_j,y_j\}$ is incident with the edge~$e$. Hence, $(x_1,\ldots ,x_i,a, x_{i+1},\ldots , x_\ell)$ is a Z-sequence of $G$, since each vertex $x_p$ footprints $y_p$ for $p \in [\ell]$ and vertex $a$ footprints vertex $b$. Thus, $\grz(G) \ge \ell+1$, a contradiction.
\QED
\end{proof}

\bigskip
In the rest of this section we will denote by $H$ the subgraph $G-(A_1(D) \cup \cdots \cup A_\ell(D))$. If there is a vertex $v \in V(G)$ such that $v$ is adjacent to all vertices of a set $X \subset V(G)$, then we will use the notation $v \sim X$.

\begin{lemma}\label{l:twins}
If $G$ is a graph with  $\Gamma_t(G)=2\grz(G)$ and $D$ is a $\Gamma_t$-set of $G$, then the vertices in $A_i(D)$ are closed twins for all $i \in [\ell]$.
\end{lemma}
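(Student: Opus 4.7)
The plan is a direct contradiction argument: assuming some $A_i(D)$ contains a pair of non-closed-twin vertices, I will exhibit a Z-sequence of length $\grz(G)+1$. To set up, I first invoke Lemma~\ref{l:K2} to write $V(C_i)=\{x_i,y_i\}$ for $i\in[\ell]$; then $|D|=2\ell$ and $\grz(G)=\Gamma_t(G)/2=\ell$. I also record once and for all that the sets $A_1(D),\ldots,A_\ell(D)$ are pairwise disjoint, which follows straight from the definition: a vertex lying in both $A_j(D)$ and $A_{j'}(D)$ with $j\ne j'$ would be adjacent to a vertex of $V(C_{j'})\subseteq D\setminus V(C_j)$, contradicting membership in $A_j(D)$.

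Suppose for contradiction that $a,b\in A_i(D)$ satisfy $N[a]\ne N[b]$. By symmetry I may pick $c\in N[a]\setminus N[b]$. Since $A_i(D)$ is a clique (Lemma~\ref{l:cliques}(a)), we have $a\in N[b]$, so $c\ne a$; and $c\ne b$ is immediate, hence $c\in N(a)\setminus N[b]$. The same clique property forces $c\notin A_i(D)$, while Lemma~\ref{l:cliques}(b) (no edges between different $A_j(D)$'s) forces $c\notin A_j(D)$ for $j\ne i$, since $c$ is adjacent to $a\in A_i(D)$. In particular, $c\notin D$.

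Now consider the sequence
\[
S=(b,\,a,\,x_1,\ldots,x_{i-1},\,x_{i+1},\ldots,x_\ell)
\]
of length $\ell+1$; its entries are pairwise distinct by the disjointness of the $A_j(D)$'s. The vertex $a$ footprints $c$ because $c\in N(a)\setminus N[b]$. For each $p\in[\ell]\setminus\{i\}$, the vertex $y_p\in A_p(D)$ does not lie in $N[b]\cup N[a]\cup\bigcup_{q<p,\,q\ne i} N[x_q]$: the disjointness of the $A_j(D)$'s shows $y_p$ differs from $b$, $a$, and from the other $x_q$'s, while Lemma~\ref{l:cliques}(b) rules out any edge from $y_p\in A_p(D)$ to $A_i(D)$ or to $A_q(D)$ with $q\ne p$. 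Hence $x_p$ footprints $y_p$, so $S$ is a Z-sequence and $\grz(G)\ge\ell+1>\ell$, a contradiction.

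The only place I expect any friction is the possibility that $a$ or $b$ coincides with $x_i$ or $y_i$, so that $S$ begins with elements of $D$. Pleasantly, all of the footprinting verifications depend only on the bulk facts that $a,b\in A_i(D)$, that the $A_j(D)$'s are disjoint, and that Lemma~\ref{l:cliques}(b) forbids edges between them, so the single sequence $S$ handles every subcase uniformly; this is what keeps the argument from branching.
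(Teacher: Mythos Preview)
Your proof is correct and follows essentially the same approach as the paper's: both assume $a,b\in A_i(D)$ with $N[a]\ne N[b]$, locate a witness vertex outside all the $A_j(D)$'s using Lemma~\ref{l:cliques}, and then exhibit the Z-sequence $(b,a,x_1,\ldots,x_{i-1},x_{i+1},\ldots,x_\ell)$ of length $\ell+1$. The paper renames so that $i=1$ and treats $\ell=1$ separately, while you keep $i$ generic and your argument handles $\ell=1$ automatically; otherwise the two proofs coincide.
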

\begin{proof}
Let $G$ be a graph with  $\Gamma_t(G) = 2\grz(G) = 2\ell$ and let $D$ be a $\Gamma_t$-set of $G$. Suppose that there exist $a,b \in A_i(D)$, for some $i \in [\ell]$, such that $N[a] \ne N[b]$. Renaming components if necessary, we may assume that $i = 1$. By Lemma~\ref{l:cliques}, the set $A_1(D)$ induces a clique. Hence renaming the vertices $a$ and $b$ if necessary, we may assume without loss of generality that there exists a vertex $u \notin A_1(D)$ such that $u \in N[a] \setminus N[b]$. By Lemma~\ref{l:cliques}, we also infer that $u \in V(H)$. If $\ell = 1$, then $(b,a)$ is a Z-sequence in $G$ noting that vertex~$b$ footprints $A_1(D)$ and vertex~$a$ footprints $u$, implying that $\grz(G) \ge 2 = \ell + 1$, a contradiction. Hence, $\ell \ge 2$. In this case, $(b,a,x_2,\ldots, x_\ell)$ is a Z-sequence in $G$, noting that vertex~$b$ footprints $A_1(D)$, vertex~$a$ footprints $u$, and vertex $x_p$ footprints $A_p(D)$ for all $p \in [\ell] \setminus \{1\}$. This produces a Z-sequence of length $\ell +1$, implying that $\grz(G) \ge \ell+1$, a contradiction. \QED
\end{proof}

\bigskip
We note that by Lemma~\ref{l:twins} and by definition of the sets $A_i(D)$, for every vertex $u \in V(H)$ there exist two distinct indices in $[\ell]$, say $i, j,$ such that $u \sim A_i(D)$ and $u \sim A_j(D)$.

\begin{lemma}\label{l:adjacent}
If $G$ is a graph with  $\Gamma_t(G)=2\grz(G)$ and $D$ is a $\Gamma_t$-set of $G$, then for any adjacent vertices $u,v \in V(H)$ it holds that $|\{i \, \colon \, u\sim A_i(D) \textrm{ and } v\sim A_i(D)\}| \ge 1$.
\end{lemma}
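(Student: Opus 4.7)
I argue by contradiction. Set $I_u = \{i \in [\ell] : u \sim A_i(D)\}$ and $I_v = \{i \in [\ell] : v \sim A_i(D)\}$, and suppose $I_u \cap I_v = \emptyset$. Since $\Gamma_t(G) = 2\grz(G) = 2\ell$, it suffices to produce a Z-sequence of length $\ell+1$ to reach a contradiction. By the remark immediately following Lemma~\ref{l:twins}, both $|I_u|$ and $|I_v|$ are at least~$2$; enumerate $I_u = \{i_1, \ldots, i_a\}$, $I_v = \{j_1, \ldots, j_b\}$, and $[\ell] \setminus (I_u \cup I_v) = \{k_1, \ldots, k_c\}$, so that $a + b + c = \ell$ with $a, b \ge 2$.

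The sequence I would consider is
\[
S = (x_{i_1}, \ldots, x_{i_a}, \, u, \, x_{j_1}, \ldots, x_{j_b}, \, x_{k_1}, \ldots, x_{k_c}),
\]
of length $a + 1 + b + c = \ell+1$. To verify that $S$ is a Z-sequence, observe first that each $x_{i_s}$ in the initial block footprints its $K_2$-partner $y_{i_s}$: by Lemma~\ref{l:cliques}(b) and the disjointness of the $A_i(D)$'s, the vertex $y_{i_s} \in A_{i_s}(D)$ lies outside every $N[x_{i_t}]$ with $t<s$. The critical step is that $u$, placed at position $a+1$, footprints $v$: indeed $v \in N(u)$ by hypothesis, $v \ne x_{i_s}$ since $v \in V(H)$, and by the closed-twin property of Lemma~\ref{l:twins} the adjacency $v \sim x_{i_s}$ would force $i_s \in I_v$, contradicting $I_u \cap I_v = \emptyset$. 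Finally, each $x_{j_s}$ footprints $y_{j_s}$ and each $x_{k_t}$ footprints $y_{k_t}$: these partners lie in $A$-components disjoint from all previously used ones, and they avoid $N[u]$ because neither $j_s$ nor $k_t$ belongs to $I_u$ ($j_s \in I_v$ by the disjointness assumption, and $k_t \notin I_u \cup I_v$ by construction).

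The main obstacle is engineering the ordering so that $u$ has a genuinely uncovered neighbor at the moment it is added. The trick is to front-load exactly the $x_i$ for $i \in I_u$: this absorbs every $A$-component that $u$ otherwise dominates, so $u$'s only remaining source of a footprint is one of its $V(H)$-neighbors, and the disjointness hypothesis then guarantees that the given neighbor $v$ has not been pulled into any $N[x_{i_s}]$. Once $u$ captures $v$, the remaining $x_j$'s and $x_k$'s append routinely, each footprinting its still-untouched $K_2$-partner. This produces a Z-sequence of length $\ell+1$, contradicting $\grz(G) = \ell$.
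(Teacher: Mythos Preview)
Your proof is correct and follows essentially the same approach as the paper's: both argue by contradiction and exhibit the Z-sequence $(x_{i_1},\ldots,x_{i_a},u,x_{j_1},\ldots)$ obtained by front-loading the $x_i$ with $i\in I_u$, then inserting $u$ (which footprints $v$), then appending the remaining $x_i$'s. Your version separates $[\ell]\setminus I_u$ into $I_v$ and the remainder and invokes the closed-twin property of Lemma~\ref{l:twins} explicitly, but these refinements are cosmetic---the paper simply lumps all of $[\ell]\setminus I_u$ together and observes directly that $v$ has no neighbor in $\bigcup_{s}A_{i_s}(D)$.
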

\begin{proof}
Let $G$ be a graph with $\Gamma_t(G)=2\grz(G)$ and let $D$ be a $\Gamma_t$-set of $G$. Let $R = V(G) \setminus V(H)$. Suppose that there exist adjacent vertices $u,v \in V(H)$ with $N_R(u) \cap N_R(v) = \emptyset$. Let $i_1,\ldots , i_k$ be the indices for which $u \sim A_i(D)$ holds and let $\{j_1,\ldots , j_{\ell-k}\} = [\ell] \setminus \{i_1,\ldots , i_k\}$. By supposition the vertex $v$ has no neighbors in $A_{i_1}(D) \cup \cdots \cup A_{i_k}(D)$. Thus, $(x_{i_1},\ldots , x_{i_k},u,x_{j_1},\ldots , x_{j_{\ell-k}})$ is a Z-sequence of $G$ of cardinality $\ell+1$, implying that $\grz(G) \ge \ell+1$, a contradiction. \QED
\end{proof}

\begin{lemma}\label{l:non-adjacent}
If $G$ is a graph with  $\Gamma_t(G)=2\grz(G)$ and $D$ is a $\Gamma_t$-set of $G$, then for any non-adjacent vertices $u,v \in V(H)$ it holds that $|\{i \, \colon \, u\sim A_i(D) \textrm{ and } v\sim A_i(D)\}| \ne 1$.
\end{lemma}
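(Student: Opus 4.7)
The plan is to argue by contradiction. Assume there exist non-adjacent $u,v \in V(H)$ with $|\{i : u \sim A_i(D) \text{ and } v \sim A_i(D)\}| = 1$; relabelling the $K_2$-components of $G[D]$, I may take the unique common index to be $1$. Let $I_u = \{i \in [\ell] : u \sim A_i(D)\}$; the paragraph preceding Lemma~\ref{l:adjacent} gives $|I_u| \ge 2$, so write $I_u = \{1, i_1, \ldots, i_{k-1}\}$ with $k = |I_u|$. Since Lemma~\ref{l:K2} gives $|D|=2\ell$ and $\Gamma_t(G) = 2\ell = 2\grz(G)$, we have $\grz(G) = \ell$, and so it suffices to produce a Z-sequence of length $\ell + 1$.

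I would propose the sequence
$$ S = (x_{i_1},\, x_{i_2},\, \ldots,\, x_{i_{k-1}},\, u,\, x_1,\, x_{j_1},\, \ldots,\, x_{j_{\ell-k}}), $$
where $\{j_1,\ldots,j_{\ell-k}\} = [\ell] \setminus I_u$; its length is $(k-1) + 2 + (\ell-k) = \ell + 1$. The routine parts are standard. Each $x_{i_r}$ footprints $y_{i_r}$, since distinct $A_i(D)$'s are vertex-disjoint with no edges between them by Lemma~\ref{l:cliques}(b). The vertex $u$ then footprints (the vertices of) $A_1(D)$, because $A_1(D) \subseteq N(u)$ (as $1 \in I_u$) and $1 \notin \{i_1, \ldots, i_{k-1}\}$ so $A_1(D)$ is disjoint from each preceding $N[x_{i_r}]$. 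Finally, each $x_{j_s}$ footprints $y_{j_s}$, using $j_s \notin I_u$ and the disjointness of distinct $A$-blocks to see that $A_{j_s}(D)$ is disjoint from $N[u]$ and from each earlier $N[x_i]$.

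The crucial step is to verify that $x_1$ footprints $v$. By Lemma~\ref{l:twins}, the vertices in $A_1(D)$ are closed twins, so $v \sim A_1(D)$ yields $v \in N(x_1)$. For freshness, $u \not\sim v$ and $u \ne v$ give $v \notin N[u]$; and for each $r \in \{1, \ldots, k-1\}$, the hypothesis $I_u \cap I_v = \{1\}$ forces $i_r \notin I_v$, so $v \not\sim A_{i_r}(D)$, which combined with $v \in V(H)$ and Lemma~\ref{l:twins} yields $v \notin N[x_{i_r}]$. Hence $S$ is a valid Z-sequence of length $\ell + 1$, so $\grz(G) \ge \ell + 1$, contradicting $\grz(G) = \ell$.

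The main obstacle I anticipate is not any single calculation but the correct ordering inside $S$. The vertex $u$ must be played before $x_1$ so that $A_1(D)$ is captured as $u$'s fresh footprint, and then $x_1$ (rather than $u$) must be the vertex that reaches $v$—this is possible precisely because $v$ lies in $N(x_1)$ via the closed-twin structure of $A_1(D)$, while $v$ avoids $N[u]$ by the non-adjacency assumption. Compared with the proof of Lemma~\ref{l:adjacent}, where $v$ was a direct neighbor of $u$ and could be footprinted by $u$ itself, here the missing edge $uv$ forces an indirect route through $A_1(D)$, and the hypothesis $|I_u \cap I_v| = 1$ is exactly what keeps $v$ fresh at the moment $x_1$ is played.
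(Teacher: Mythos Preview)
Your proof is correct and follows essentially the same approach as the paper: both argue by contradiction and exhibit the Z-sequence $(x_{i_1},\ldots,x_{i_{k-1}},u,x_1,x_{j_1},\ldots,x_{j_{\ell-k}})$ of length $\ell+1$, with the same footprinting verifications (each $x_{i_r}$ footprints $y_{i_r}$, $u$ footprints $A_1(D)$, $x_1$ footprints $v$, and each $x_{j_s}$ footprints $y_{j_s}$). Your write-up is in fact more careful than the paper's, explicitly invoking Lemma~\ref{l:twins} to pass from $i_r\notin I_v$ to $v\notin N[x_{i_r}]$.
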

\begin{proof}
Let $G$ be a graph with  $\Gamma_t(G)=2\grz(G)$ and let $D$ be a $\Gamma_t$-set of $G$. Suppose that there exist non-adjacent vertices $u,v \in V(H)$ such that the only common neighbors of $u$ and $v$ in $V(G) \setminus V(H)$ are vertices of $A_{i_1}(D)$ for some $i_1 \in [\ell]$. Let $i_1,\ldots , i_k$ be the indices for which $u \sim A_i(D)$ holds and let $\{j_1,\ldots , j_{\ell-k}\}=[\ell] \setminus \{i_1,\ldots , i_k\}$. By supposition, the vertex $v$ has no neighbors in $A_{i_2} \cup \cdots \cup A_{i_k}$. Then $(x_{i_2},x_{i_3},\ldots , x_{i_k},u,x_{i_1},x_{j_1},\ldots , x_{j_{\ell-k}})$ is a Z-sequence. Indeed, for $i \ne i_1$, the vertex $x_i$ footprints $A_i(D)$, the vertex $u$ footprints $A_{i_1}(D)$, and the vertex $x_{i_1}$ footprints $v$. Thus, $\grz(G) \ge \ell +1$, a contradiction. \QED
\end{proof}

\bigskip
Let $G$ be a graph with $\Gamma_t(G)=2\grz(G)$ and let $D$ be a $\Gamma_t$-set of $G$. For a subset $B \subseteq V(H)$, we will denote by $X_{B}(D)$ the set
\[
X_B(D)=\{i\in[\ell] \, \colon \, u\sim A_i(D) \textrm{ for some } u\in B\}.
\]

We will denote by $m_{B}(D)$ the largest cardinality of a minimal subset $X \subseteq X_B(D)$ such that
\[
B \subseteq \bigcup_{i \in X} N(x_i).
\]
With this notation, we introduce the following lemma.

\begin{lemma}
\label{l:isolated}
If $G$ is a graph with $\Gamma_t(G)=2\grz(G)$ and $D$ is a $\Gamma_t$-set of $G$, then for every $B \subseteq V(H)$ we have
\[
\grz(G[B\setminus B'])+m_{B'}(D)\le |X_B(D)|,
\]
where $B'=\{u\in B \, \colon \, u\textrm{ is an isolated vertex of }G[B]\}$.
\end{lemma}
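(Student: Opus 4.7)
The plan is to build a Z-sequence of $G$ of length $\grz(G[B\setminus B'])+m_{B'}(D)+(\ell-|X_B(D)|)$ and then invoke the identity $\grz(G)=\ell$, which follows from $\Gamma_t(G)=2\grz(G)$ together with Lemma~\ref{l:K2} (each component of $G[D]$ is a $K_2$, so $|D|=2\ell$). The stated inequality then rearranges to the claim.

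I assemble the Z-sequence in three consecutive blocks, using the notation $x_i,y_i$ for the endpoints of the $i$th $K_2$-component of $G[D]$, and recalling, via Lemma~\ref{l:twins} and the discussion following it, that every vertex of $V(H)$ is either non-adjacent to all of $A_i(D)$ or adjacent to all of $A_i(D)$. First, I would take a maximum Z-sequence $(u_1,\ldots,u_s)$ of $G[B\setminus B']$, where $s=\grz(G[B\setminus B'])$. Since $B\setminus B'\subseteq V(H)$ and the edges of $G$ between vertices of $B\setminus B'$ coincide with the edges of $G[B\setminus B']$, this sequence remains a Z-sequence in $G$. Next, I would append the vertices $x_i$ for $i\in[\ell]\setminus X_B(D)$ in any order, so that each $x_i$ footprints its partner $y_i$: indeed $y_i$ is not in $N_G[x_{i'}]$ for any previously added $x_{i'}$ by Lemma~\ref{l:cliques}(b), and it is not in $N_G[u_j]$ either, since otherwise $u_j\sim A_i(D)$ (Lemma~\ref{l:twins}), forcing $i\in X_B(D)$. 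Finally, I would fix a minimal subset $X=\{i_1,\ldots,i_m\}\subseteq X_{B'}(D)$ covering $B'$ with $|X|=m_{B'}(D)=m$, and for each $k\in[m]$ select a witness $b_{i_k}\in B'$ such that $b_{i_k}\in N(x_{i_k})$ and $b_{i_k}\notin N(x_j)$ for every $j\in X\setminus\{i_k\}$ (such a witness exists by the minimality of $X$). Appending $x_{i_1},\ldots,x_{i_m}$ in this order, each $x_{i_k}$ footprints $b_{i_k}$.

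The key verification is that each $b_{i_k}$ is still uncovered at the moment $x_{i_k}$ is added. Since $b_{i_k}\in B'$ is isolated in $G[B]$, it has no neighbour among the $u_j$'s; since $b_{i_k}\sim A_j(D)$ forces $j\in X_{B'}(D)\subseteq X_B(D)$, the vertex $b_{i_k}$ is non-adjacent to every $x_i$ with $i\notin X_B(D)$; and the defining property of the witness guarantees that $b_{i_k}$ is non-adjacent to every $x_{i_j}$ with $j<k$. Thus the three blocks concatenate into a valid Z-sequence of $G$ of total length $s+(\ell-|X_B(D)|)+m$, and comparing with $\grz(G)=\ell$ yields the desired bound. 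The main subtlety is the order of the blocks: the sequence from $G[B\setminus B']$ must come first, because some $u_j$ may well be adjacent to $x_i$ for $i\in X_B(D)$, and prematurely covering such an $A_i(D)$ would preclude the use of $x_i$ in the third block; the careful choice of the private witnesses $b_{i_k}$ is exactly what extracts $m_{B'}(D)$ additional footprints from the indices in $X$.
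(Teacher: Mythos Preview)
Your proof is correct and follows essentially the same strategy as the paper's: both build a Z-sequence in $G$ by concatenating a longest Z-sequence of $G[B\setminus B']$, the vertices $x_i$ with $i\notin X_B(D)$, and the vertices $x_{i_k}$ coming from a minimal cover $X\subseteq X_{B'}(D)$ of $B'$, and then compare the length with $\grz(G)=\ell$. The only cosmetic difference is that the paper places the $x_{i_k}$'s before the $x_i$'s with $i\notin X_B(D)$, whereas you swap these two blocks; either order works, and your verification that each $b_{i_k}$ remains uncovered is in fact more explicit than the paper's.
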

\begin{proof}
Let $G$ be a graph with $\Gamma_t(G)=2\grz(G)=2\ell$ and let $D$ be a $\Gamma_t$-set of $G$. Let $B \subseteq V(H)$ and $B'=\{u\in B \, \colon \, u\textrm{ is an isolated vertex of }G[B]\}$. Renaming vertices in $D$ if necessary, we may assume without loss of generality that $X_{B}(D) = [k]$. Suppose, to the contrary, that $\grz(G[B\setminus B']) + m_{B'}(D) > k$. Let $S=(a_1,\ldots,a_s)$ be a $\grz$-sequence of $G[B\setminus B']$ and let $X=\{i_1,\ldots,i_m\}\subseteq X_{B'}(D)$ be a minimal subset of cardinality $m=m_{B'}(D)$ such that the vertices in $\{A_i(D) \, \colon \, i\in X\}$ dominate $B'$. We now consider the sequence given by
\[
S'=(a_1,\ldots,a_s,x_{i_1},\ldots,x_{i_m},x_{k+1},\ldots,x_{\ell}).
\]

The sequence $S'$ is a $Z$-sequence of $G$. To see this, note that the vertex $a_i$ footprints some vertex of $G[B\setminus B']$ for all $i \in [s]$ since $S$ is a $\grz$-sequence of $G[B\setminus B']$. By our choice of $X$, the vertex $x_{i_j}$ footprints some $u\in B'$. Finally, since $B \cap N(x_i)=\emptyset$, the vertex $x_{i}$ footprints the vertices in $A_i(D)$ for all $i\ge k+1$. Thus, $S'$ is a $Z$-sequence of $G$ of length
\[
\underbrace{\grz(G[B\setminus B'])+m_{B'}(D)}_{>k}+(\ell-k)>\ell=\grz(G),
\]
and we arise to a contradiction. \QED
\end{proof}

\bigskip
We summarize the above lemmas into the following result in which we adopt the notation established in this section.

\begin{prop}
\label{prp:Gamma=2ZG}
If $G$ is a graph with  $\Gamma_t(G)=2\grz(G)$ and $D$ is a $\Gamma_t$-set of $G$, then the following properties hold. \\ [-20pt]
\begin{enumerate}
\item[{\rm (i)}] each component of $G[D]$ is isomorphic to $K_2$ and so $|D|=2\ell$, for some integer $\ell$; \1
\item[{\rm (ii)}] $N[x_i] \cap A_i(D)=N[y_i] \cap A_i(D)$, for each $i\in[\ell]$; \1
\item[{\rm (iii)}] $A_i(D)$ induces a clique, for each $i \in [\ell]$; \1
\item[{\rm (iv)}] there are no edges between $A_i(D)$ and $A_j(D)$, for each $\{i,j\}\subset [\ell]$;  \1
\item[{\rm (v)}] vertices in $A_i(D)$ are closed twins, for each $i \in [\ell]$; \1
\item[{\rm (vi)}] for every adjacent vertices $u,v \in V(H)$, we have
\[
|\{i:\, u\sim A_i(D) \textrm{ and } v\sim A_i(D)\}| \ge 1;
\]
\item[{\rm (vii)}] for every non-adjacent vertices $u,v \in V(H)$ it holds that
\[
|\{i:\, u\sim A_i(D) \textrm{ and } v\sim A_i(D)\}| \ne 1;
\]
\item[{\rm (viii)}]  for every $B\subset V(H)$, if $B'=\{u\in B:u\textrm{ is an isolated vertex of }G[B]\}$, we have
\[
\grz(G[B\setminus B'])+m_{B'}(D)\le |X_B(D)|.
\]
\end{enumerate}
\end{prop}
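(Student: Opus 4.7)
The plan is straightforward: Proposition~\ref{prp:Gamma=2ZG} is a collation of the sequence of lemmas proved immediately above it, each under the same hypothesis that $\Gamma_t(G)=2\grz(G)$ with $D$ a $\Gamma_t$-set of $G$. So the proof amounts to citing the appropriate lemma for each of the eight items and checking that the notation matches.

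Concretely, I would proceed in order. Item (i) is the first half of Lemma~\ref{l:K2}: every component of $G[D]$ is a $K_2$, and we simply let $\ell$ be the number of such components so $|D|=2\ell$. Item (ii) is the second half of Lemma~\ref{l:K2}. Items (iii) and (iv) are parts~(a) and~(b) of Lemma~\ref{l:cliques} respectively. Item (v) is the statement of Lemma~\ref{l:twins}. Item (vi) is Lemma~\ref{l:adjacent}, reformulated in the $\sim$ notation. Item (vii) is Lemma~\ref{l:non-adjacent}, again in the $\sim$ notation. Finally, item (viii) is exactly Lemma~\ref{l:isolated}, using the definitions of $X_B(D)$, $m_B(D)$, and the graph $H=G-(A_1(D)\cup\cdots\cup A_\ell(D))$ introduced just before those lemmas.

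Because every claim has already been established, there is no genuine obstacle here. The only thing to be careful about is consistency of notation: the lemmas are phrased with respect to an arbitrary $\Gamma_t$-set $D$ (and the associated sets $A_i(D)$, graph $H$, etc.), and the proposition uses precisely this same setup, so no translation step is required. Thus the proof can be written in essentially a single line per item, for example: \textbf{Proof.} Items~(i) and~(ii) follow from Lemma~\ref{l:K2}; items~(iii) and~(iv) follow from Lemma~\ref{l:cliques}(a) and~(b); item~(v) follows from Lemma~\ref{l:twins}; items~(vi) and~(vii) follow from Lemmas~\ref{l:adjacent} and~\ref{l:non-adjacent}; and item~(viii) follows from Lemma~\ref{l:isolated}. \QED

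If anything at all needs slight expansion, it is only a brief remark that once (i) is established and we fix the labeling $V(C_i)=\{x_i,y_i\}$ for $i\in[\ell]$, the sets $A_i(D)$ and the subgraph $H$ are well defined, and then the remaining items (ii)--(viii) are direct translations of the corresponding lemma statements.
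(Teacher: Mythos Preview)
Your proposal is correct and matches the paper's approach exactly: the proposition is explicitly introduced as a summary of Lemmas~\ref{l:K2}--\ref{l:isolated}, and your item-by-item attribution is precisely the intended content. The paper in fact gives no separate proof at all, so your one-line-per-item version is, if anything, more detailed than what the paper provides.
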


We do not think that the combination of properties in Proposition~\ref{prp:Gamma=2ZG} is sufficient for an isolate-free connected graph $G$ to satisfy $\Gamma_t(G)=2\grz(G)$. Therefore, we pose the following problem.

\begin{prob}
\label{prob:2}
Determine if the properties in Proposition~\ref{prp:Gamma=2ZG} are sufficient for an isolate-free connected graph $G$ to satisfy $\Gamma_t(G)=2\grz(G)$. If not, then extend these properties to obtain a characterization of graphs $G$ with $\Gamma_t(G)=2\grz(G)$.
\end{prob}

Now, we present a large family of graphs achieving the bound from Theorem~\ref{thm:gammaZ-uppertotal}. Let $H$ be an arbitrary graph with $\grz(H) \le \ell$. Let $G$ be the graph obtained from $H$ by adding $\ell$ disjoint cliques $K_{n_1},\ldots , K_{n_\ell}$, each of order at least~$2$, and then connecting by an edge every $u \in V(H)$ with every vertex of $K_{n_1} \cup \cdots \cup K_{n_\ell}$. The resulting graph $G$ satisfies $\Gamma_t(G)=2\ell$ and $\grz(G)=\ell$. Since $H$ is an induced subgraph of $G$, we can formulate this observation as follows.

\begin{ob}
Every graph $H$ is an induced subgraph of a graph $G$ with ${\Gamma_t(G)}=2\grz(G)$.
\end{ob}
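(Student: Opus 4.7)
Following the construction described in the preceding paragraph, fix $\ell \ge \grz(H)$ (taking $\ell \ge \grz(H)+1$ makes the argument below cleanest in full generality). Let $G$ be obtained from $H$ by adjoining $\ell$ disjoint cliques $K_{n_1},\ldots,K_{n_\ell}$ of order at least~$2$ and inserting every edge between $V(H)$ and $\bigcup_i V(K_{n_i})$. Since no edge is added inside $V(H)$, the induced subgraph of $G$ on $V(H)$ is exactly $H$, giving the induced-subgraph containment. It remains to verify $\Gamma_t(G) = 2\grz(G)$.

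For the lower bound $\Gamma_t(G) \ge 2\ell$, pick $\{x_i,y_i\}\subseteq V(K_{n_i})$ and set $D=\bigcup_i\{x_i,y_i\}$. Every vertex of $G$ has a neighbor in each $K_{n_i}$, so $D$ is a TD-set; since no edge of $G$ joins two distinct cliques and $D\cap V(H)=\emptyset$, each pair $\{x_i,y_i\}$ is a $K_2$-component of $G[D]$ whose endpoints are mutual internal private neighbors, making $D$ a minimal TD-set of size $2\ell$. For the upper bound $\grz(G)\le\ell$, I would analyze any Z-sequence $(v_1,\ldots,v_k)$ of $G$ using two key observations: (i) a clique vertex $v$ satisfies $V(H)\subseteq N[v]$ and a $V(H)$-vertex $v$ satisfies $\bigcup_i V(K_{n_i})\subseteq N[v]$, so once one vertex of each type has appeared in the sequence, $V(G)$ is closed-covered and no further vertex may be appended; (ii) at most one vertex from each $K_{n_i}$ can appear, since a later $w\in K_{n_i}$ satisfies $N(w)\subseteq K_{n_i}\cup V(H)=N[v]$ for the earlier $v\in K_{n_i}$. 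A short case analysis then bounds $k$ by $\ell$ when the sequence is either all-clique or a run of clique vertices followed by a single $V(H)$-vertex (for the latter, the Z-condition on the trailing $V(H)$-vertex forces the preceding run to omit at least one clique), and by $\grz(H)+1$ when the sequence is either all-$V(H)$ (which reduces to a Z-sequence of $H$ via the identity $N_G(v_i)\setminus\bigcup_{j<i}N_G[v_j]=N_H(v_i)\setminus\bigcup_{j<i}N_H[v_j]$) or a $V(H)$-prefix followed by one clique vertex. Choosing $\ell$ large enough absorbs both bounds into $k\le\ell$.

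Combining $\grz(G)\le\ell$ with the lower bound $\grz(G)\ge\ell$, witnessed by the Z-sequence $(x_1,\ldots,x_\ell)$ in which each $x_i$ for $i\ge 2$ footprints the remaining vertices of $K_{n_i}$, gives $\grz(G)=\ell$. Theorem~\ref{thm:gammaZ-uppertotal} then yields $\Gamma_t(G)\le 2\ell$, which combined with $\Gamma_t(G)\ge 2\ell$ produces the desired equality $\Gamma_t(G)=2\grz(G)$. The only delicate step is the case analysis underpinning $\grz(G)\le\ell$, since it requires tracking how clique vertices and $V(H)$-vertices absorb each other's closed neighborhoods; the rest of the argument is routine verification.
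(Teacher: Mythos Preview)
Your proof is correct and uses the same construction as the paper. The paper merely asserts that the resulting graph $G$ satisfies $\Gamma_t(G)=2\ell$ and $\grz(G)=\ell$ without justification; you supply the missing details, in particular the case analysis showing $\grz(G)\le\ell$. Your remark that one should take $\ell\ge\grz(H)+1$ rather than just $\ell\ge\grz(H)$ is a genuine refinement: with $\ell=\grz(H)$ the $H$-prefix-plus-one-clique case can produce a Z-sequence of length $\grz(H)+1>\ell$ (for instance when $H=\overline{K_2}$ and $\ell=1$), so the paper's unqualified assertion ``$\grz(G)=\ell$'' is slightly imprecise, and your adjustment repairs it.
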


We note that in many graphs the Z-Grundy domination number is (much) bigger than the upper total domination number, and the ratio ${\grz(G)}/{\Gamma_t(G)}$ can be arbitrarily large. Indeed, let $G$ be an arbitrary graph, and let $G^*$ be obtained from the disjoint union of $G$ and the complete graph $T$ of order $2$ with $V(T)=\{a,c\}$ by adding the edges from the set $\{ax \, \colon \,x\in V(G)\}$. Every TD-set of $G^*$ has to contain the vertex $a$ in order to totally dominate the vertex $c$. However, any minimal TD-set containing $a$ can only have two vertices, which yields $\Gamma_t(G^*)=2$. On the other hand, we observe that $\grz(G^*) \ge \grz(G)+1$, which can be arbitrarily large.

%%%%%%%%%%%%%%%%%%%%%%%%
%%%%%%%%%%%%%%%%%%%%%%%%%%%%
\section{Graphs with $Z(G)=\delta(G)$ and power domination}
\label{sec:powerdom}
%%%%%%%%%%%%%%%%%%%%%%%%%%%%%

A trivial lower bound on the zero forcing number in terms of the minimum degree came from the original paper on zero forcing due to the AIM-Group~\cite{AIM}, which showed that
\begin{equation}
\label{eq:ZvsDelta}
Z(G)\ge \delta(G)
\end{equation}
holds for all graphs $G$. Simple examples where the bound is attained are paths, since $Z(P_n)=1$. In this section, we will characterize the graphs $G$ that attain the bound, which is equivalent to satisfying $\grz(G)=n(G)-\delta(G)$, using a connection with the concept of power domination. Furthermore, we find a characterization of all graphs with power domination equal to~$1$.

As remarked earlier, the concept of power domination was introduced in~\cite{haynes-2002},  where it was motivated by the problem of monitoring an electrical power network. Next, we present its definition.

Power domination is a graph searching process which starts by placing phase measurement units on a set $S$ of vertices in the power network, which are then labeled as observed (for the purpose of relating the process with zero forcing we will call these vertices blue).  Now, the searching process consists of the following two steps, where the Propagation Step can be repeated:
\begin{enumerate}
\item[(1)]
Initialization Step (Domination Step): All vertices in $S$ as well as all neighbors of vertices in $S$ are observed (i.e., colored blue).
\item[(2)]
Propagation Step (Zero Forcing Step): Every vertex which is the only unobserved (i.e., non-blue) neighbor of some observed (i.e., blue) vertex becomes observed (i.e., blue).
\end{enumerate}
If eventually the whole network is observed (that is, all vertices become blue), $S$ is called a \emph{power dominating set}. The minimum cardinality of a power dominating set of a graph G is the \emph{power domination number}, and is denoted by $\gamma_P(G)$.

The following lemma provides an interesting relation between certain extremal families of graphs in power domination and in zero forcing.

\begin{lemma}\label{l:powerDomination}
If $G$ is a graph of order~$n$ with minimum degree $\delta$, then $Z(G)=\delta$ if and only if $\gamma_P(G)=1$ and there exists a power dominating set $\{x\}$ such that $\deg_G(x)=\delta$.
\end{lemma}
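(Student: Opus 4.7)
The plan is to prove both implications by exploiting a simple structural observation: under the hypotheses on $Z(G)$ or on $\gamma_P(G)$, we can pinpoint the very first force in a forcing (or propagation) process and use it to pass between a witness for $\gamma_P(G)=1$ and a minimum zero forcing set.

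For the forward direction, I would start with an arbitrary zero forcing set $S$ of size $\delta$. The key observation is that the \emph{first} vertex to force, call it $u\in S$, must satisfy $|N(u)\setminus S|=1$; combined with $|S|=\delta$ and $u\in S$, this forces $|N(u)\cap S|\le \delta-1$ while minimum degree gives $|N(u)\cap S|=\deg(u)-1\ge \delta-1$, so $\deg(u)=\delta$ and $N(u)=(S\setminus\{u\})\cup\{w\}$, where $w$ is the first forced vertex. Consequently $N[u]=S\cup\{w\}$ contains the zero forcing set $S$, hence starting zero forcing from $N[u]$ still colors the whole graph. This is exactly the statement that $\{u\}$ is a power dominating set: the domination step produces $N[u]$ blue, and the propagation phase then mimics zero forcing from $N[u]\supseteq S$. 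Thus $\gamma_P(G)=1$ with a witness of degree $\delta$.

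For the reverse direction, suppose $\{x\}$ is a power dominating set with $\deg_G(x)=\delta$, so $N[x]$ is a zero forcing set of size $\delta+1$. If $V(G)=N[x]$ then $\delta=n-1$, so $G=K_n$ and the standard identity $Z(K_n)=n-1=\delta$ yields the claim. Otherwise the propagation phase contains at least one forcing step, whose very first forcer cannot be $x$ (since every neighbor of $x$ is already blue after the initialization step); thus the first forcer is some $v\in N(x)$ with $|N(v)\setminus N[x]|=1$. I would then set $T=N[x]\setminus\{v\}$, which has cardinality $\delta$, and argue that $T$ is a zero forcing set: starting from $T$ blue, the vertex $x\in T$ has $N(x)\setminus T=\{v\}$, so $x$ forces $v$; the blue set is now $N[x]$, from which zero forcing succeeds because $N[x]$ itself is a zero forcing set. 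This gives $Z(G)\le\delta$, which combined with the trivial bound~\eqref{eq:ZvsDelta} yields $Z(G)=\delta$.

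There is no real obstacle here; the proof is essentially bookkeeping around the first forcing step. The one place to be a bit careful is handling the degenerate case $V(G)=N[x]$ (equivalently $G=K_n$) in the reverse direction, since then no propagation takes place and the argument collapsing $N[x]$ to a set of size $\delta$ via a first-forcer $v$ does not apply; this case must be disposed of separately using $Z(K_n)=n-1$. Once that case is set aside, the identification of the first forcer and the symmetric exchange between $x$ and $v$ (removing $v$ from $N[x]$ while using $x$ to re-force it) give the result immediately.
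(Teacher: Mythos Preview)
Your proof is correct and follows essentially the same approach as the paper. In the reverse direction, however, you work harder than necessary: the paper simply removes an \emph{arbitrary} neighbor $y$ of $x$ from $N[x]$ (rather than hunting for the first propagating vertex $v$), since $x$ can then force $y$ and restore the full set $N[x]$ regardless---this also covers the case $V(G)=N[x]$ without a separate $K_n$ argument.
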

\begin{proof} %Note that $\grz(G)=n-\delta$ is equivalent to $Z(G)=\delta$, where $G$ is an arbitrary graph.
First, assume that $Z(G)=\delta$. Therefore, there exists a zero forcing set $S$ with $\delta$ vertices, and let $x\in S$ be the vertex with which the color-change rule starts. Clearly, $x$ has $\deg_G(x)$ neighbors, and exactly one neighbor of $x$, say $y$, needs to be non-blue before the propagation process starts. Since $|S|=\delta$, this implies that $\deg_G(x)=\delta$ and $S\subset N_G[x]$. In addition, $S'=\{x\}$ is a power dominating set, since once the initialization step is over, $N_G[x]$ is dominated, and the process of propagation in power domination is the same as the zero forcing process.

Conversely, let $S'=\{x\}$ be a power dominating set of $G$, where $\deg_G(x)=\delta$. Clearly, $N_G[x]$ is a zero forcing set. In addition, after removing a neighbor $y$ of $x$ from $N_G[x]$, the set $S=N_G[x]\setminus \{y\}$ is also a zero forcing set of $G$. Indeed, the color-change rule can be applied on the blue vertex $x$ with only one non-blue neighbor, which is vertex $y$, after which the same propagation rules can be used (in either power domination and zero forcing). This implies that $Z(G)\le |N_G[x]|-1=\delta$, and we know that $Z(G)\ge \delta$ in any graph $G$. Thus $Z(G)=\delta$ as claimed. \QED
\end{proof}

\bigskip
Graphs $G$ with power domination number $\gamma_P(G)=1$ were studied in~\cite{SKP-21}, where many families of such graphs were presented, within the class of regular graphs of high degree (at least $n(G)-4$). Next, we give a complete characterization of graphs $G$ with $\gamma_P(G)=1$. The idea for the class of graphs that characterizes these graphs was inspired by Row's construction~\cite{Row-12}, where  graphs with zero forcing number equal to~$2$ were characterized. First, we introduce the following notations. For a path $P \colon a_1,a_2,\ldots , a_k$ and vertex $a_i$ of the path $P$ we denote the subpath of $P$ from $a_{i+1}$ to $a_k$ by $^{a_i}\!P:a_{i+1},a_{i+2},\ldots , a_k$.

A graph $G$ is a \emph{graph of  $k$ internally parallel paths} if there exists $x \in V(G)$, which is an end-vertex of each of $k$ internally pairwise disjoint induced paths $Q_1,\ldots , Q_k$, where $V(G)=\cup_{i=1}^{k}{V(Q_i)}$, and $V(Q_i) \cap V(Q_j)=\{x\}$ for any $i \ne j$, by possibly adding any number of edges between different paths, provided that the following property holds:
\begin{itemize}
    \item for every set of $\ell$ vertices, $x_{i_1} \in V(Q_{i_1}),\ldots , x_{i_\ell} \in V(Q_{i_\ell})$, where $\ell\le k$, each belonging to distinct paths and none of them being an end-vertex of the corresponding path, it holds that there exists a vertex $x' \in \{x_{i_1},\ldots , x_{i_\ell}\}$ with $\deg_Y(x')=1$, where $Y=V(^{x_{i_1}}\!Q_{i_1})\cup \cdots \cup \, V(^{x_{i_\ell}}\!Q_{i_\ell})$.
\end{itemize}

For example, the graph $G$ illustrated in Figure~\ref{fig:parallel} is a graph of three internally parallel paths.

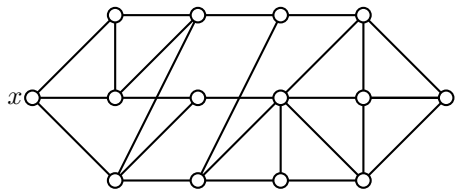
\begin{figure}[htb]
\begin{center}
\begin{tikzpicture}[scale=1.1,style=thick,x=1cm,y=1cm]
\def\vr{2.5pt} % \vr = vertex radius;

\draw (0,1)--(5,1);
\draw (0,1)--(1,0);
\draw (0,1)--(1,2);
\draw (1,0)--(4,0);
\draw (1,2)--(4,2);
\draw (4,2)--(4,1);
\draw (4,0)--(4,1);
\draw (1,1)--(1,2);
\draw (1,1)--(2,2);
\draw (1,0)--(2,1);
\draw (1,0)--(2,2);
\draw (2,0)--(3,1);
\draw (2,0)--(3,2);
\draw (3,0)--(3,1);
\draw (3,1)--(4,2);
\draw (3,1)--(4,0);
\draw (4,1)--(5,1);
\draw (4,2)--(5,1);
\draw (4,0)--(5,1);

% \draw (9.8,12) [fill=white] circle (\vr);
\draw (1,0) [fill=white] circle (\vr);
\draw (1,1) [fill=white] circle (\vr);
\draw (1,2) [fill=white] circle (\vr);
\draw (2,0) [fill=white] circle (\vr);
\draw (2,1) [fill=white] circle (\vr);
\draw (2,2) [fill=white] circle (\vr);
\draw (0,1) [fill=white] circle (\vr);
\draw (3,0) [fill=white] circle (\vr);
\draw (3,1) [fill=white] circle (\vr);
\draw (3,2) [fill=white] circle (\vr);
\draw (4,0) [fill=white] circle (\vr);
\draw (4,1) [fill=white] circle (\vr);
\draw (4,2) [fill=white] circle (\vr);
\draw[anchor = east] (0,1) node {$x$};
\draw (5,1) [fill=white] circle (\vr);

\end{tikzpicture}
\caption{A graph $G$ of three internally parallel paths}
\label{fig:parallel}
\end{center}
\end{figure}

\begin{thm}\label{thm:PowerDomination1}
Let $G$ be a graph. Then $\gamma_P(G)=1$ if and only if $G$ is a graph of $k$ internally parallel paths for some $k$.
\end{thm}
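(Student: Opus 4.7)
The plan is to prove both implications by exploiting the close analogy between the propagation step of power domination and the structure of zero forcing chains. For the backward direction, assuming $G$ is a graph of $k$ internally parallel paths $Q_1,\ldots,Q_k$ with common endpoint $x$, I would show that $S=\{x\}$ is a power dominating set. After the initialization step, $N[x]$ is observed, so in particular $x$ and each $q_{i,1}$ are blue. Suppose for contradiction that the propagation gets stuck while some vertices are still non-blue. For each path $Q_i$ that still contains a non-blue vertex, let $k_i\ge 2$ be the smallest index with $q_{i,k_i}$ non-blue; then the set $Y=\bigcup_i \{q_{i,k_i},\ldots,q_{i,n_i}\}$ coincides exactly with the non-blue vertex set. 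Applying the defining property of $G$ to the internal vertices $q_{i,k_i-1}$ (one per relevant path, each verified to be internal to its $Q_i$) would yield an index $i^*$ with $\deg_Y(q_{i^*,k_{i^*}-1})=1$, making $q_{i^*,k_{i^*}-1}$ a blue vertex with a unique non-blue neighbor and contradicting the stuck assumption.

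For the forward direction, I would invoke the standard fact that zero forcing chains are induced paths. From $\gamma_P(G)=1$ with power dominating set $\{x\}$, the propagation starting at $N[x]$ observes all of $V(G)$, with each vertex outside $N[x]$ having a unique forcer. Grouping the chains by their starting neighbor $y_i\in N(x)$ and prepending $x$ yields $k=\deg_G(x)$ induced paths $Q_i\colon x,y_i,q_{i,2},\ldots,q_{i,n_i}$ that pairwise share only $x$ and together cover $V(G)$; each $Q_i$ remains induced after prepending $x$ because any further neighbor of $x$ would already be blue after initialization and therefore cannot appear later in any chain.

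It then remains to verify the parallel-path property for $Q_1,\ldots,Q_k$. Given $\ell$ internal vertices $x_{i_j}=q_{i_j,m_j}$ on distinct paths, I would set $Y=\bigcup_j V({}^{x_{i_j}}\!Q_{i_j})$ and let $T^*$ denote the earliest propagation time at which any vertex of $Y$ is forced. Since forces along each $Q_i$ occur in path order, $T^*$ is attained by some $x_{i_{j^*}}$ forcing its path-successor $q_{i_{j^*},m_{j^*}+1}$; just before time $T^*$ every vertex of $Y$ is still non-blue (its own forcing time is at least $T^*$), so the unique non-blue neighbor of $x_{i_{j^*}}$ at that moment lies in $Y$, giving $\deg_Y(x_{i_{j^*}})=1$. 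The main subtlety lies in the bookkeeping of forcing times across different paths, particularly in justifying that $Y$ is still entirely non-blue at time $T^*$ and that the forcer realizing $T^*$ is one of the selected $x_{i_j}$; both rely on forces respecting the internal order of each $Q_i$ together with the definition of $T^*$ as the first entry time into $Y$.
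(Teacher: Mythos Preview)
Your proposal is correct and follows essentially the same route as the paper: in both directions the argument rests on the forcing chains emanating from $N[x]$ together with the defining property of $k$ internally parallel paths, with the paper phrasing the backward direction constructively where you argue by contradiction, and both using the ``earliest force among the selected vertices'' idea for the forward direction. One small imprecision: your claim that $Y$ coincides \emph{exactly} with the non-blue set is not justified (the paper sidesteps this by always forcing the next vertex along a path, keeping colored vertices as prefixes), but your argument only needs the immediate containment $U\subseteq Y$ together with the fact that the path-successor $q_{i^*,k_{i^*}}\in U$, so the conclusion $\deg_U(x')=1$ still follows.
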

\begin{proof}
Let $G$ be a graph with $\gamma_P(G)=1$, and let $\{x\}$ be a power dominating set of $G$. Let $k=\deg(x)$, and $N(x)=\{v_1^1,\ldots,v_1^k\}$. We have that $N[x]$ is a zero forcing set. So, in every propagation step, an uncolored vertex $w$ is turned into blue, where $w$ is the only uncolored vertex of a blue vertex. Note that if $N[x]=V(G)$, then $G$ is a graph with $k$ internally parallel graphs, where each one of these paths is $Q_i \colon x,v_1^i$. Hence we may assume that $N[x] \ne V(G)$, for otherwise the desired result hold.

Let $X=\{v_1^1,\ldots,v_1^k\}$. Let $w$ be the vertex colored in the first propagation step. Thus, there exist some $v_1^i$ such that $w$ is the only uncolored neighbor of $v_1^i$. We let $v_2^i=w$, and update $X$ to be $X=(X\setminus\{v_1^i\})\cup \{v_2^i\}$. We note that all the vertices in $X$ are blue and the blue vertices that are not in $X$ have all their neighbors colored blue at this point. This property will be maintained throughout the process.

We continue the process in the following way. In any propagation step we have a set $X=\{v_{i_1}^1,\ldots,v_{i_k}^k\}$, and let $w$ be the vertex that turns into blue in the present step. Then, there is a blue vertex for which $w$ is the only uncolored neighbor. Since, by assuming the mentioned property, any blue vertex that is not in $X$ has all its neighbors colored blue, then $w$ is the only uncolored vertex of some $v_{i_j}^{j}\in X$. We let $v_{i_j+1}^{j}=w$ and update $X$ to be $X=(X\setminus\{v_{i_j}^j\})\cup\{v_{i_j+1}^j\}$. We note that $X$ has $k$ blue vertices, and the blue vertices that are not in $X$ have all their neighbors colored blue, thus the desired property is maintained. We continue until all the vertices of $G$ are turned into blue.

At the end of the process, paths $Q_i \colon x,v_1^i,\ldots,v_{n_i}^i$, where $i\in[k]$, have been determined. To prove that $G$ is a graph of $k$ internally parallel paths, we need to verify the additional condition that these paths must satisfy.

Now, let $\{v_{i_1}^{j_1},v_{i_2}^{j_2},\ldots,v_{i_\ell}^{j_\ell}\}$, where $\ell \le k$, be a set  of vertices each belonging to pairwise distinct paths and $i_m<n_m$ for each $m\in[\ell]$. Let us assume, without loss of generality, that they were used to color an uncolored neighbor in the order $v_{i_1}^{j_1},v_{i_2}^{j_2},\ldots,v_{i_\ell}^{j_\ell}$. We remark that in the propagation step in which $v_{i_1}^{j_1}$ is chosen to color a neighbor, $v_{i_1+1}^{j_\ell}$ is the only uncolored neighbor of $v_{i_1}^{j_1}$. Moreover, all the vertices $v_{i_m}^{j_m}$ with $m>1$ are uncolored at this point. We note also that if $v_i^j$ is uncolored, then all the vertices in $^{v_{i}^j}Q_j$ are uncolored. As a consequence of these observations, all the vertices in $^{v_{i_m}^{j_m}}Q_{j_m}$ are uncolored for $m\ge 1$. So, we have that $v_{i_1+1}^{j_1}$ is the only neighbor of $v_{i_1}^{j_1}$ in
\[
Y=\bigcup_{m\in [\ell]} V(^{v_{i_m}^{j_m}}Q_{j_m}).
\]

Therefore, $G$ is a graph of $k$ internally parallel paths. Now, let $G$ be a graph of $k$ internally parallel paths with paths $Q_1 \colon x,v_1^1,\ldots,v_{n_1}^1$ through to $Q_k \colon x,v_1^k, \ldots,v_{n_k}^k$. We will prove that $\{x\}$ is a power dominating set. Equivalently, we need to prove that $\{x,v_1^1,\ldots,v_1^k\}$ is a zero forcing set. If $N[x]=V(G)$, we are done. Otherwise, let $X=\{v_1^1,\ldots,v_1^k\}\setminus\{v_{n_1}^1,\ldots,v_{n_k}^k\}$. Since $G$ is a graph of $k$ internally parallel paths, there is a vertex $v_{1}^i$ such that $\deg_Y(v_1^i)=1$, where
\[
Y=\bigcup_{j\in [k]}{V(^{v_1^j}Q_j)},
\]
and the neighbor is $v_2^1$. Then, we color $v_2^i$ blue, since it is the only uncolored neighbor of $v_1^i$. We continue this propagation process as follows. Let $X'$ be the set of the last colored vertices in each path $Q_i$, and $X=X'\setminus\{v_{n_1}^1,\ldots,v_{n_k}^k\}=\{v_{i_1}^{j_1},\ldots,v_{i_\ell}^{j_\ell}\}$. If $X\ne \emptyset$, since $G$ is a graph of $k$ internally parallel paths, there is a vertex $v_{i_m}^{j_m}$ such that $\deg_Y(v_{i_m}^{j_m})=1$, where
\[
Y=\bigcup_{t\in [\ell]}{V(^{v_{i_t}^{j_t}}Q_{j_t})},
\]
and the neighbor is $v_{i_m+1}^{j_m}$. Thus, since it is the only uncolored neighbor of $v_{i_m}^{j_m}$, we turn it into blue. We can continue until all the vertices in $G$ are blue, and therefore $\{x\}$ is a power dominating set and $\gamma_P(G)=1$.
    \QED
\end{proof}

\bigskip
Extremal graphs for the bound~\eqref{eq:ZvsDelta} are known for graphs with minimum degree $1$ or~$2$. The only graphs with $\delta(G)=1$ and $Z(G)=1$ are paths~\cite{ECK-17}. It was proved in~\cite{Row-12} that $Z(G)=2$ if and only if $G$ is a graph of two parallel paths, or, equivalently, $G$ is an outerplanar graph with the path cover number equal to $2$.
In our context, we are interested in those graphs with $Z(G)=2$ that have $\delta(G)=2$. One can derive that the only graphs of two such parallel paths (i.e., outerplanar graphs with path cover number equal $2$) with minimum degree $2$ are the $2$-connected outerplanar graphs. (The proof is straightforward, but we omit the details, since it follows from Corollary~\ref{cor:BigDelta}.)
%Hence, for graphs $G$ with $\delta(G)=2$ we get the following extremal family for the bound~\ref{eq:ZvsDelta}.

\begin{cor}
If $G$ is a graph with $\delta(G)=2$, then $Z(G)=\delta(G)$ if and only if $G$ is a 2-connected outerplanar graph.
\end{cor}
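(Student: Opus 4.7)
The plan is to combine Row's characterization~\cite{Row-12} of graphs with $Z(G)=2$ as outerplanar graphs of path cover number~$2$ with the power-domination results proved above, namely Lemma~\ref{l:powerDomination} and Theorem~\ref{thm:PowerDomination1}. Because $\delta(G)=2$, the equality $Z(G)=\delta(G)$ reduces to $Z(G)=2$, so Row's theorem already supplies outerplanarity in both directions; the corollary refines Row by adding the $2$-connectedness conclusion under the hypothesis $\delta(G)=2$.

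For the forward direction, starting from $Z(G)=\delta(G)=2$, I would apply Lemma~\ref{l:powerDomination} to obtain a power dominating set $\{x\}$ with $\deg_G(x)=2$, and then invoke Theorem~\ref{thm:PowerDomination1} to present $G$ as a graph of $k$ internally parallel induced paths $Q_1,\ldots,Q_k$ meeting only at $x$. Since $x$ has exactly one neighbor in each $Q_i$ and $\deg_G(x)=2$, necessarily $k=2$ and there is no extra edge incident with $x$. Writing $y,z$ for the far endpoints of $Q_1,Q_2$, the condition $\delta(G)=2$ combined with the fact that $y$ (resp.~$z$) has only one neighbor inside its own induced path forces at least one chord from $y$ to $V(Q_2)\setminus\{x\}$ and from $z$ to $V(Q_1)\setminus\{x\}$. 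A short case analysis over candidate cut-vertices $u$---with cases $u=x$, $u$ internal to some $Q_i$, and $u\in\{y,z\}$---then shows that $G-u$ is always connected, using the chord at $y$ or at $z$ in the first two cases and the shared vertex $x$ in the last. Thus $G$ is $2$-connected, and outerplanarity follows from Row applied to $Z(G)=2$.

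For the backward direction, let $G$ be a $2$-connected outerplanar graph. Then $\delta(G)\ge 2$ from $2$-connectedness and $\delta(G)\le 2$ because outerplanar graphs are $2$-degenerate, yielding $\delta(G)=2$. To obtain $Z(G)=2$ I plan to appeal to Row by verifying that $G$ has path cover number equal to~$2$: a $2$-connected outerplanar graph is Hamiltonian, and its Hamilton cycle on the outer face, split at a suitably chosen vertex and re-routed through chords where necessary, exhibits $V(G)$ as a union of two internally disjoint induced paths. An inductive alternative starts from $K_3$ and, at each step, removes a degree-$2$ ``ear'' vertex $v$ whose two neighbors $u,w$ are joined by a chord; this chord becomes a Hamilton-cycle edge of $G-v$, so $G-v$ remains $2$-connected outerplanar and inductively admits a zero forcing set of size~$2$, which then lifts to $\{v,u\}$ in $G$ by first forcing $w$ from $v$.

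The main obstacle is the backward direction, specifically for graphs without any triangular ear---such as the cycles $C_n$ themselves or $C_6$ with a single long chord $v_1v_4$: in these, every degree-$2$ vertex has non-adjacent neighbors, so removing it destroys $2$-connectedness and the ear induction fails. I would circumvent this by either handling pure cycles as additional base cases, by inducting jointly on the number of vertices and the number of chords, or by producing the two-parallel-paths decomposition directly from the outerplanar embedding. The forward direction, by contrast, is essentially mechanical once Lemma~\ref{l:powerDomination} and Theorem~\ref{thm:PowerDomination1} are in hand.
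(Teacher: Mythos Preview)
Your forward direction is sound and matches the paper's intended route (the paper itself omits the proof, pointing to Corollary~\ref{cor:BigDelta}): from $Z(G)=\delta(G)=2$ you obtain two internally parallel induced paths meeting at a degree-$2$ vertex $x$; the far endpoints $y,z$ pick up cross-chords by the hypothesis $\delta(G)=2$; your cut-vertex case analysis then yields $2$-connectedness; and Row's theorem supplies outerplanarity.

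Your unease about the backward direction is well placed---in fact that implication is \emph{false}, so none of your proposed strategies can be completed. Let $G$ be the $9$-cycle on $v_0,v_1,\ldots,v_8$ together with the three chords $v_0v_2$, $v_3v_5$, $v_6v_8$. This $G$ is Hamiltonian, hence $2$-connected; it is outerplanar, since the three chords sit in pairwise disjoint arcs of the cycle; and $\delta(G)=2$, witnessed by $v_1,v_4,v_7$. Yet $Z(G)=3$. Indeed, a zero forcing set of size~$2$ must contain a degree-$2$ vertex together with one of its neighbors (otherwise no first force is available), and by the rotational automorphism $v_i\mapsto v_{i+3}$ it suffices to test $\{v_0,v_1\}$: after the forces $v_1\to v_2$, then $v_0\to v_8$ and $v_2\to v_3$, the process stalls because $v_3$ still has the two white neighbors $v_4,v_5$ while $v_8$ still has the two white neighbors $v_6,v_7$. (On the other hand $\{v_0,v_1,v_4\}$ is a zero forcing set, so $Z(G)=3$ exactly.) Equivalently, each of $v_1,v_4,v_7$ lies in a triangle with its two neighbors and hence must be an endpoint of whichever induced path contains it, which forces the path cover number above~$2$; Row's theorem then already excludes $Z(G)=2$. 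The corollary as stated therefore needs amending, and the difficulty you identified in the backward direction is not a gap in your argument but in the statement itself.
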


For graphs $G$ with $\delta(G) \ge 3$, it is worth noting that the idea arising from graphs with two parallel paths is extended by the definition of graphs with $k$ internally parallel paths, where $k$ can be arbitrarily large.

From the proof of Theorem \ref{thm:PowerDomination1}, it follows that $G$ is a graph of $k$ internally parallel graphs if and only if $\gamma_P(G)=1$ and there exists a power dominating set $\{x\}$ with $\deg(x)=k$, and in consequence $Z(G)\le k$. Combining this with Lemma \ref{l:powerDomination}, we have a characterization of graphs with $Z(G)=\delta(G)$, or, equivalently $\grz(G)=n(G)-\delta(G)$.

\begin{cor}\label{cor:BigDelta}
    Let $G$ be a graph. Then $Z(G)=\delta(G)$ if and only if $G$ is a graph of $\delta(G)$ internally parallel paths.
\end{cor}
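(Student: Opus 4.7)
The plan is to deduce this corollary directly from Lemma~\ref{l:powerDomination} and Theorem~\ref{thm:PowerDomination1}, relying on the observation already recorded in the paragraph just before the statement: in the decomposition of a graph into $k$ internally parallel paths, the integer $k$ equals $\deg_G(x)$, where $x$ is the common endpoint of the paths (which is simultaneously the vertex forming a power dominating set of size~$1$). Once this identification is in hand, the corollary becomes essentially a concatenation of the two previous results.

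For the forward direction, I would assume $Z(G)=\delta(G)$. Lemma~\ref{l:powerDomination} then supplies a vertex $x$ with $\deg_G(x)=\delta(G)$ such that $\{x\}$ is a power dominating set of $G$; in particular $\gamma_P(G)=1$. Invoking Theorem~\ref{thm:PowerDomination1}, $G$ is a graph of $k$ internally parallel paths for some $k$, and inspecting the construction in its proof (which starts from the zero forcing set $N_G[x]$ and produces one path per neighbor of $x$) gives $k=\deg_G(x)=\delta(G)$. For the converse, I would assume $G$ is a graph of $\delta(G)$ internally parallel paths with root vertex $x$, so that $\deg_G(x)=\delta(G)$. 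Theorem~\ref{thm:PowerDomination1} then guarantees $\gamma_P(G)=1$ with $\{x\}$ a power dominating set, and Lemma~\ref{l:powerDomination} converts this immediately into $Z(G)=\delta(G)$.

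No serious obstacle arises, since all the structural work has already been carried out in the preceding two results. The only bookkeeping worth flagging is ensuring that the integer $k$ on each side of the biconditional coincides with $\delta(G)$, which is transparent from the identification $k=\deg_G(x)$ built into both auxiliary results.
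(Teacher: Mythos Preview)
Your proposal is correct and matches the paper's approach exactly: the paper likewise deduces the corollary by combining Lemma~\ref{l:powerDomination} with Theorem~\ref{thm:PowerDomination1}, using the identification $k=\deg_G(x)$ that comes out of the proof of the latter. Your write-up simply spells out both implications of this combination in slightly more detail than the paper's one-line summary.
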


%%%%%%%%%%%%%%%%%%%%%%%%%%%%%%%
%%%%
%%%%%%%%%%%%%%%%%%%%%%%%%%%%%%%%

%%%%%%%%%%%%%%%%%%%%%%%%%%%%%
%%%%%%%%%%%%%%%%%%%%%%%%%%%%%
\section*{Acknowledgments}

The first and the third author were supported by the Slovenian Research and Innovation agency (grants P1-0297, J1-2452, J1-3002, and J1-4008). The second author was partially supported by grants PIP CONICET 1900, PICT-2020-03032 and PID 80020210300068UR. Research of the fourth author was supported in part by the South African National Research Foundation under grant number 132588 and the University of Johannesburg. The second and fourth authors thank the University of Maribor for the hospitality.

%%%%%%%%%%%%%%%%%%%%%%%%%%%%{%%%%%%%%%%%%%%%%%%%%%%%%%%%%%%%%%%%%%%
\medskip

\end{document}